\newcommand{\bnn}{\begin{equation*}}
	\newcommand{\enn}{\end{equation*}}
\newcommand{\be}{\begin{equation}}
	\newcommand{\ee}{\end{equation}}
\newcommand{\ba}{\begin{aligned}}
	\newcommand{\ea}{\end{aligned}}
\numberwithin{equation}{section}
\newtheorem{theorem}{Theorem}[section]
\newtheorem{lemma}{Lemma}[section]
\newtheorem{remark}{Remark}[section]
\title{On Outer Pressure Problem of   Compressible Navier-Stokes System  with Degenerate Heat-Conductivity   in  Unbounded Domains
}
\author{Manyu Liu$^a$, Yanfang Peng$^b$, Zhilun Peng$^c$
	\thanks{Y. F. Peng was partially supported by  National Nature Science Foundation of China (Grant Nos.12361046).
		Email addresses:  liumy@email.ncu.edu.cn(M. Y. Liu); pyfang2005@sina.com (Y. F. Peng); zhilunp@student.unimelb.edu.au (Z. L. Peng)}\\[3mm]  a. School of Mathematics and Computer Science ,\\
	Nanchang University, Nanchang 330031, P. R. China \\ b. School of Mathematical Sciences, Guizhou Normal University,  \\Guiyang, 550001, P. R. China\\c.  School of Mathematics and Statistics, Faculty of Science,\\ The University of Melbourne, Parkville, VIC 3010, Australia  }
\date{}
\begin{document}
	\maketitle
	\begin{abstract}
		    The  compressible Navier-Stokes system with   the   constant viscosity and the nonlinear heat conductivity which  is proportional to a positive power of the temperature  and may be degenerate is considered. Under the outer pressure boundary conditions in one-dimensional  unbounded spatial domains,  the global existence   of the strong solutions  is obtained after  proving that  both the specific volume and temperature are  bounded from below
		and above independently of time and space. Moreover,  the asymptotically stability of global solutions is established as time tends to infinity.
		
		\noindent
		{\textbf
			{keywords:}} Compressible Navier-Stokes system; Strong solutions; Outer pressure; Degenerate heat conductivity; Large-time Behavior
	\end{abstract}
	\section{Introduction and main results}
	\par \quad \quad
	The compressible Navier-Stokes system describing the one-dimensional motion of a viscous heat-conducting polytorpic gas, can be written in the Lagrange variables as follows (see\cite{a,b}):	
	\begin{equation}
		v_{t}=u_{x}, \label{1.1}
	\end{equation}
	\begin{equation}
		u_{t}+P_{x}=\mu\left(\frac{u_{x}}{v}\right)_{x},  \label{1.2}
	\end{equation}
	\begin{equation}
		\left(e+\frac{u^{2}}{2}\right)_{t}+(P u)_{x}=\left(\kappa \frac{\theta_{x}}{v}+\mu \frac{u u_{x}}{v}\right)_{x},     \label{1.3}
	\end{equation}
	where $t>0$ is time, $x\in \Omega =(0, \infty)\subset \mathbb{R}$ denotes the Lagrange mass coordinate, and the unknown functions $v>0, u, \theta>0, P, e>0$ are the specific volume of the gas, fluid velocity, absolute temperature, pressure and internal energy respectively. In this paper, we focus on the ideal polytropic gas, that is, $P$ and  $e>0$ satisfy
	\begin{equation}
		P=R \theta / v, \, e=c_{v} \theta+\text {const}, \label{1.4}
	\end{equation}
	where $R$ (specific gas constant) and  $c_{v}$ (heat capacity at constant volume) are both  positive constants. Moreover, viscosity coefficient $\mu$ and heat conductivity $\kappa$ are proportional to (possibly different) powers of $\theta$:
	\begin{equation}
		\mu =\tilde{\mu} \theta^\gamma,\,\kappa = \tilde{\kappa} \theta^\beta,
		\label{1.5}
	\end{equation}
	where $\tilde{\mu}$, $\tilde{\kappa}>0$ and $\gamma$, $\beta \ge 0$ are constants.
	
	The system (\ref{1.1})-(\ref{1.5}) is supplemented with the initial condition
	\begin{equation}
		(v(x, 0), u(x, 0), \theta(x, 0))=\left(v_{0}(x), u_{0}(x), \theta_{0}(x)\right), \quad x \in \Omega,
		\label{1.6}
	\end{equation}
	and far-field and boundary ones
	\begin{equation}
		\ \lim _{x \rightarrow\infty}(v, u,\theta)=(1,0,1),
		\label{1.7}
	\end{equation}
	\begin{equation}
		\left(-R\frac{\theta}{v}+\mu\frac{u_x}{v}\right)(0,t)=
		-P(t) \equiv-R,
		\label{1.8}
	\end{equation}
	\begin{equation}
		\theta_{x}(0, t)=0,
		\label{1.9}
	\end{equation}
	where the first boundary condition, \eqref{1.8},  is called outer pressure boundary condition.
	
	There is a  large number of literatures on the large-time existence and behavior of solutions to system (\ref{1.1})-(\ref{1.5}).  For constant coefficients $\gamma=\beta=0$, Kazhikhov and Shelukhin \cite{c} first proved the global existence of solution for large initial data in bounded domains.
	From then on, significant progress has been made on the mathematical aspect of the initial boundary value problems, see \cite{f,g,h,i,j,k,l} and the references therein.
	Motivated by the fact that in the case of isentropic flow, a temperature dependence on the viscosity translates into a density dependence, there are much effort has been made, see \cite{i,m,n,o,p,q} concentrating on the case that $\mu$ is independent of $\theta$, and heat conductivity $\kappa$ is allowed to depend on temperature in a special way with a positive lower bound and balanced with corresponding constitution relations.
	
	When it comes to the case that  $\gamma=0, \beta >0$, the research becomes  more challenging due to the possible degeneracy and strong nonlinearity on viscosity and heat diffusion. On the one hand, for the bounded domain, Jenssen-Karper \cite{r}   showed the global existence of   weak solutions  when $\gamma = 0 $ and $ \beta \in (0,3/2)$. Later on, for $\gamma = 0$ and $\beta >0$, Pan-Zhang \cite{s} (see also \cite{t} and  \cite{u}) obtained the global strong solution. On the other hand, when the domain is unbounded, Li-Shu-Xu \cite{LSX} proved the existence of global strong solutions under the initial conditions that
	$$(v_0-1,u_0,\theta_0-1) \in H^1 \times H^1 \times H^1,  \inf_{x\in \Omega}v_0(x)>0,  \inf_{x\in \Omega}\theta_0(x)>0.$$

	Regarding to the large-time behavior of strong solutions, when $\gamma=0$ and  the domain is bounded, Kazhikhov \cite{kaz1} and Huang-Shi \cite{z} showed that the solution is nonlinearly exponentially stable as time tends to infinity for $\beta=0$ and $\beta>0$ respectively. In the case of unbounded
	domains and large data,  Li-Liang \cite{LL} (see also \cite{bb}, \cite{JS})  established the  large-time
	behavior of strong solutions.  However,  it is worth noticing that  the method
	used in \cite{LL}  relies heavily on the non-degeneracy of the heat conductivity $\kappa$ and cannot
	be applied directly to the degenerate and nonlinear case $\beta>0$. Recently, for the case that $ \beta>0$ and unbounded domain, Li-Xu \cite{LX} made studies  on  the large-time behavior of the global strong solutions obtained in Li-Shu-Xu \cite{LSX}.
	
	In recent years, the outer pressure  problem with $P(t)<0$ in \eqref{1.8} has attracted a lot of  attention. For the bounded domains,  Nagasawa \cite{w} first obtained the convergence of the solutions to a stationary state and the rate of its convergence with $\gamma=\beta = 0$. And then, Cai-Chen-Peng \cite{x} extended their results to $\gamma=0$ and $\beta>0$. Recently, when  the domain is unbounded, for $\gamma=\beta=0$, Han-Wu-Zhang \cite{y} established the existence and asymptotically stability of global strong solutions. It is interesting to study the global existence and large-time behavior of the strong solutions of  the system  (\ref{1.1})-(\ref{1.9}) with $\gamma=0, \beta>0$ in unbounded domains.
We  state our result as follows.	
	\begin{theorem}\label{theorem1}
		Suppose that
		\begin{equation*}
			\gamma = 0, \,\, \beta>0,
		\end{equation*}
		and  the initial data $(v_0,u_0,\theta_0)$ satisfy
		\begin{equation*}
			v_{0}-1, u_{0}, \theta_{0}-1 \in H^{1}(\Omega),\, \inf _{x \in \Omega} v_{0}(x)>0,\,\inf _{x \in \Omega} \theta_{0}(x)>0.
		\end{equation*}
		Then  system (\ref{1.1})-(\ref{1.9}) has a unique global strong solution $(v,u,\theta)$  such that
		\begin{equation}
			\left\{\begin{array}{l}
				v-1, u, \theta-1 \in L^{\infty}\left(0,
				\infty; H^{1}(\Omega)\right),\\
				v_{t} \in L^{\infty}\left(0, \infty; L^{2}(\Omega)\right)\cap L^2(0,\infty;H^1(\Omega)), \\
				u_x,\theta_x, u_t,\theta_t,v_{xt}, u_{xx}, \theta_{xx}\in L^2(0,\infty;L^2(\Omega)).
				\label{1.10}
			\end{array}\right.
		\end{equation}
		Moreover,
		\begin{equation}
			C^{-1}\leq v(x,t)\leq C, \,\, C^{-1}\leq \theta(x,t) \leq C,
			\label{1.11}
		\end{equation}
		\begin{equation}
			\sup_{0\le t <\infty}\|(v-1, u, \theta-1)\|_{H^1(\Omega)} + \int_{0}^{\infty}\left(\|v_{x}\|^2_{L^2(\Omega)}+\|(u_x, \theta_x)\|^2_{H^1(\Omega)}\right)dt \le C,
			\label{1.12}
		\end{equation}
		and for any $p\in(2,\infty]$,
		\begin{equation}
			\lim_{t\rightarrow \infty}\left(\left\|(v-1,u,\theta-1)(t)\right\|_{L^p(\Omega)}
			+\|(v_x, u_x, \theta_x)\|_{L^2(\Omega)}\right)=0,
			\label{1.13}
		\end{equation}		
		where  $C>0$ is a constant depending only on $\tilde{\mu}$, $\tilde{\kappa}$, $\beta$, $R$, $c_{v}$, $\left\|(v_0-1,u_0,\theta_0- 1)\right\|_{H^1(\Omega)}$, $\underset{x \in \Omega}{\inf}v_{0}(x)$ and $\underset{x \in \Omega}{\inf}\theta_{0}(x)$.
	\end{theorem}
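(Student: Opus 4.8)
The plan is to prove Theorem~\ref{theorem1} by the usual continuation scheme: first invoke the local existence theory for strong solutions (which, for the outer-pressure/stress boundary condition, is obtained by a routine linearization-and-fixed-point argument, just as in the non-degenerate case), and then derive a priori estimates that are \emph{uniform in time} on any interval $[0,T]$ of existence. These uniform bounds—above all the pointwise bounds \eqref{1.11} and the energy bound \eqref{1.12}—let one continue the local solution to a global one, after which uniqueness and the decay \eqref{1.13} follow. The structural feature I would exploit throughout is that the prescribed outer pressure $P(t)\equiv R$ coincides with the far-field pressure $R\cdot 1/1 = R$; this consistency turns the external boundary work into a perfect time derivative and is what allows the basic balance to close despite the unbounded domain and the stress (rather than velocity) condition at $x=0$.

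The foundational step is the energy--entropy estimate. Writing the stress $\sigma=-R\theta/v+\mu u_x/v$ and using $u_x/v=(\ln v)_t$, equation \eqref{1.2} reads $u_t=\sigma_x$ with $\sigma(0,t)=-R$ and $\sigma\to-R$ as $x\to\infty$. Multiplying \eqref{1.2} by $u$, taking the internal-energy form $c_v\theta_t+Pu_x=(\kappa\theta_x/v)_x+\mu u_x^2/v$, integrating over $\Omega$ and adding, the heat flux $[\kappa\theta_x/v]_0^\infty$ vanishes by \eqref{1.9} and the far field, while the external work collapses to $Ru(0,t)=-R\frac{d}{dt}\int_\Omega(v-1)\,dx$ via $v_t=u_x$; hence $\frac{d}{dt}\int_\Omega\big(u^2/2+c_v(\theta-1)+R(v-1)\big)dx=0$, i.e. total energy is conserved. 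Dividing the internal-energy equation by $\theta$ yields the entropy identity $s_t=(\kappa\theta_x/(v\theta))_x+\kappa\theta_x^2/(v\theta^2)+\mu u_x^2/(v\theta)$ with $s=R\ln v+c_v\ln\theta$, whose boundary terms again vanish. Setting $\mathcal E(t)=\int_\Omega\big(u^2/2+R(v-\ln v-1)+c_v(\theta-\ln\theta-1)\big)dx$ then gives the monotone identity $\mathcal E(t)+\int_0^t\!\!\int_\Omega\big(\kappa\theta_x^2/(v\theta^2)+\mu u_x^2/(v\theta)\big)dx\,ds=\mathcal E(0)$. Since $y-\ln y-1\ge0$, this simultaneously bounds $\|u\|_{L^2}$, controls $v$ and $\theta$ through their convex entropies, and provides the time-integrated dissipation.

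With this in hand the next step is the pointwise bounds \eqref{1.11}. For the specific volume I would use a Kazhikhov-type representation: integrating \eqref{1.2} in $x$ expresses the stress, and solving the resulting first-order ODE $\mu v_t-\sigma v=R\theta$ for $v$ with an integrating factor yields, after inserting the energy--entropy bounds, the uniform two-sided bound $0<C^{-1}\le v\le C$. The temperature bounds are the heart of the matter and the main obstacle, precisely because of the degeneracy $\kappa=\tilde\kappa\theta^\beta$: the diffusion $\kappa/v$ weakens as $\theta\to0$, so neither bound is automatic. For the upper bound I would run an $L^p$/Moser iteration on the temperature equation, exploiting that the nonlinear flux $\kappa\theta_x=\tilde\kappa\theta^\beta\theta_x$ supplies extra diffusion at large $\theta$; for the lower bound I would work with the degenerate parabolic equation for $\theta$ (or an auxiliary variable such as $\theta^{-\beta}$) and combine a maximum-principle/De~Giorgi-type argument with the dissipation integrals, adapting the degenerate-conductivity technique of Li--Shu--Xu and Li--Xu to the present stress boundary condition. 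This is where I expect the real work to lie.

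Once $v$ and $\theta$ are trapped between positive constants the system is uniformly parabolic and the remaining estimates are more routine. I would differentiate \eqref{1.1}--\eqref{1.3}, test against $u_x,\ \theta_x,\ u_t,\ \theta_t$ and $v_{xt}$, and use the pointwise bounds to absorb the nonlinear terms and the boundary contributions generated by \eqref{1.8}--\eqref{1.9}, thereby upgrading to the $H^1$ bounds and the space--time integrals recorded in \eqref{1.10} and \eqref{1.12}; global existence then follows by continuation, and uniqueness from a standard energy argument for the difference of two solutions. Finally, for the decay \eqref{1.13} I would note that \eqref{1.12} places $\|v_x\|_{L^2}^2,\ \|u_x\|_{L^2}^2,\ \|\theta_x\|_{L^2}^2$ in $L^1(0,\infty)$ while the higher-order bounds control their time derivatives; the elementary lemma that an $L^1(0,\infty)$ function with integrable derivative tends to zero then forces $\|(v_x,u_x,\theta_x)\|_{L^2}\to0$. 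Interpolating via $\|f\|_{L^\infty}^2\le C\|f\|_{L^2}\|f_x\|_{L^2}$ together with the uniform $L^2$ bound on $(v-1,u,\theta-1)$ gives $\|(v-1,u,\theta-1)\|_{L^\infty}\to0$, and $\|f\|_{L^p}\le\|f\|_{L^2}^{2/p}\|f\|_{L^\infty}^{1-2/p}$ then yields the stated $L^p$ decay for every $p\in(2,\infty]$.
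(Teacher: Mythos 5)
Your outline is reliable up through the energy--entropy estimate and the Kazhikhov-type representation for $v$ (these match Lemmas \ref{lemma2.2} and \ref{lemma2.3} of the paper), and your final decay argument for \eqref{1.13} is also essentially the paper's. The genuine gap is your third step: you propose to establish the pointwise temperature bounds in \eqref{1.11} immediately after the bounds on $v$, by Moser iteration for the upper bound and a maximum-principle/De Giorgi argument (on $\theta^{-\beta}$) for the lower bound, and you then declare the remaining higher-order estimates ``routine'' because the system has become uniformly parabolic. In the degenerate case $\kappa=\tilde\kappa\theta^{\beta}$ this order of argument does not close. Moser iteration on the temperature equation \eqref{2.2} needs space--time control of the source $u_x^2/v$ well beyond the entropy dissipation $\int_0^T\!\!\int_\Omega u_x^2/(v\theta)\,dx\,dt$, which is all you have at that stage; and no maximum principle can deliver a \emph{time-independent} positive lower bound, because the diffusion coefficient $\theta^{\beta}/v$ degenerates exactly where you need it while the term $-\theta u_x/v$ can drive $\theta$ down. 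This failure is precisely why the references you invoke (Li--Shu--Xu, Li--Xu) do not proceed this way.

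The correct structure is the reverse of yours. One must first prove, \emph{without} any pointwise control of $\theta$, the time-independent $L^2(\Omega\times(0,T))$ bounds on $u_x$ and $\theta^{-1/2}\theta_x$ (Lemmas \ref{lemma2.6} and \ref{lemma2.7}, split into the cases $\beta\in(0,1)$ and $\beta\ge 1$, using among other things the multiplier $|u|^{2/(1-\beta)}u$ for \eqref{1.2} and the pointwise-max estimate \eqref{2.57}); then the $L^\infty(0,T;L^2)$ bound on $v_x$ (Lemma \ref{lemma2.8}); then the $H^1$-level bounds on $u$ together with $\int_0^T\!\!\int_\Omega(u_{xx}^2+\theta_x^2+v_x^2+u_t^2)\,dx\,dt\le C$ (Lemma \ref{lemma q}); and only then the two-sided bound \eqref{GJ} on $\theta$ --- the upper bound from $\sup_t\int_\Omega(\theta^\beta\theta_x)^2dx\le C$, the uniform lower bound from the large-time decay of $\|\theta-1\|_{L^6(\Omega)}$ combined with a finite-time bound taken from \cite{LSX}. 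Moreover, each of these intermediate lemmas requires new handling of boundary terms generated by the outer pressure condition \eqref{1.8}, e.g.\ the term $\int_0^T\left|\left[u(\theta-2)_+\right](0,t)\right|dt$ in \eqref{2.31} and the term $\left|\frac{uu_x}{v}(0,t)\right|$ in \eqref{2.60}, estimated via \eqref{2.37} and \eqref{2.61}; your sketch never confronts these. So the part you correctly identify as ``the heart of the matter'' cannot be done by the method you propose, and the estimates you defer as routine are exactly where the paper's actual work (and novelty) lies.
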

	
	\begin{remark}  Theorem \ref{theorem1} can be regarded as a nature generalization result of Han-Wu-Zhang \cite{y} where they discussed the case that $\gamma=\beta=0$.
	\end{remark}
	\begin{remark} Theorem \ref{theorem1} implies that the result is true for the general case that $$P(t)\equiv  P\left(\lim\limits_{x\rightarrow\infty}v(x,t), \lim\limits_{x\rightarrow\infty} \theta(x,t)\right).$$
		Thus, it is an interesting  problem when  $P(t)\not\equiv P\left(\lim\limits_{x\rightarrow\infty}v(x,t),\lim\limits_{x\rightarrow\infty} \theta(x,t)\right)$,  which will be left for future.
	\end{remark}
	
	We now make some comments on the analysis of this paper. The key issue is to get the time-independent lower and upper bounds of $v$ and $\theta$ (see \eqref{2.3}, \eqref{GJ}). Compared with \cite{y} and \cite{LX}, the main difficulties come from the degeneracy and  nonlinearity of the
	heat conductivity due to $\beta>0$ or the outer pressure boundary conditions \eqref{1.8}. Firstly, to arrive at \eqref{2.3}, after  establishing the  standard energetic estimate \eqref{2.1} and  the local expression of $v$ (see \eqref{2.6}),  we make some slight modifications  of  the ideas in \cite{LX,y} to obtain the time-independent lower and upper bounds of $v$ in  the half-space. Next, the key step is  to establish the time-independent  bound on the $L^2(\Omega \times (0,T))$-norm of $u_x$ and $\theta^{-1/2}\theta_x$ (see \eqref{2.28} and \eqref{2.54}). To this end, we  mainly follow some ideas from Li-Liang \cite{LL} and  Li-Xu\cite{LX}. However, due to the outer pressure boundary conditions \eqref{1.8}, some new discussions have to be involved here. Indeed,  on one hand, for $\beta\in (0,1)$, it is essential to control  the integral term
	$$
	\mathbb{B} \triangleq\int_0^\infty \int_{\Omega\setminus\Omega_2(t)}\theta^\beta\theta_x^2dxdt
	$$
	for $\Omega_2(t)\triangleq \{x\in \Omega|\theta(x,t)>2\}$.
	Comparing with the proof of Lemma 2.3 in \cite{LX}, we need to deal with  the additional term (see \eqref{2.31}) caused by the outer pressure boundary condition \eqref{1.8}, that is,
	$$\int_0^T|u(\theta-2)_+(0,t)|dt.
	$$
	By  observing  a simple fact  that $(\theta-3/2)_+\geq 1/2$ when $\theta\geq2$, one can check  the following estimate (see \eqref{2.37})
	\begin{equation*}
		\begin{aligned}
			\int_0^T|u(\theta-2)_+(0,t)|dt
			&\le C\int_0^T\|u_x\|^2_{L^2(\Omega)}dt + C\int_0^T\max_{x\in\Omega}|\theta(x,t)-3/2|^2_+dt.
		\end{aligned}	
	\end{equation*}
	Consequently, we obtain  that $\mathbb{B}$ can be controlled by
	\begin{equation}
		\label{app}
		\int_0^\infty \max_{x\in \Omega}(\theta-3/2)_+^2(x,t)dt,
	\end{equation}
	which is indeed bounded by $C(\varepsilon)+C\varepsilon \mathbb{B}$ for any $\varepsilon>0$ (see \eqref{2.53}). On the other hand, for  $\beta\geq1$, we reach the time-independent bound on the $L^2(\Omega \times (0,T))$-norm of $u_x$ and $\theta^{-1/2}\theta_x$ (see \eqref{2.54}) by a standard argument as \cite{LX}. Finally, to obtain the higher order estimates,  it is crucial to get the bounds on  $L^\infty(0,T;L^2(\Omega))$ and $L^2(0,T; L^2(\Omega))$ of $v_x$. We need to modify some ideas in \cite{y} due to the  degeneracy and nonlinearity of the heat conductivity  $\theta^\beta,\beta>0$. Firstly,  we can treat the outer pressure term  by (\ref{app}) and (see \eqref{2.61})	
	\begin{equation*}
		\begin{aligned}
			\left|\frac{uu_x}{v}(0,t)\right|
			&\le C +C\|u_x\|^2_{L^2(\Omega)} + C\max_{x\in\Omega}|\theta(x,t)-3/2|^2_+,
		\end{aligned}	
	\end{equation*}
	which  gives (see \eqref{aa1})
	\begin{equation*}
		\begin{aligned}
			f_t + \frac{1}{8}f &\le C\int_{{\Omega}}\frac{\theta^2_x}{\theta}dx +C\max_{x\in\Omega}(\theta-3/2)^2_+ +C\|u_x\|^2_{L^2(\Omega)}\\
			& \quad + C\max_{x\in\Omega}(1-\theta)^4_+ f + C,
		\end{aligned}
	\end{equation*}
	where $f(t)$ is defined as \eqref{2.66}. Combining this with Gronwall's inequality, we succeed in getting the desired estimate on the  $L^\infty(0,T;L^2(\Omega))$ of $v_x$.  Then, multiplying (1.2) by $-(u_x-\theta+v)_x$, and modifying  the ideas of \cite{x},  we can derive the  the bound on the $L^2(\Omega \times (0,T))$-norm of $u_{xx}$, $\theta_x$,  $v_x$ and $u_t$ (see Lemmas \ref{lemma q}).  With these estimates at hand, we derive the upper and lower bound of the temperature (see Lemma \ref{lemma2.10}).  Finally, carrying out the standard arguments, we can derive \eqref{1.12} and \eqref{1.13}.

	\section{Proof of Theorem 1.1}
	We start with the local existence result which can be proved by applying the Banach theorem and
	the contraction mapping principle (see\cite{qq1,qq2,qq3}).
	
	\begin{lemma}\label{lemma2.1}
		Under the conditions of Theorem \ref{theorem1}, there exists some $T>0$  such that the initial-boundary-value problem (\ref{1.1})-(\ref{1.9}) has a unique strong solution($v, u, \theta$) satisfying
		\begin{equation*}
			\left\{\begin{array}{l}
				v-1, u, \theta-1 \in L^{\infty}\left(0,
				T; H^{1}(\Omega)\right),\\
				v_{t} \in L^{\infty}\left(0, T; L^{2}(\Omega)\right)\cap L^2(0, T;H^1(\Omega)), \\
				u_x,\theta_x, u_t,\theta_t,v_{xt}, u_{xx}, \theta_{xx}\in L^2(0,T;L^2(\Omega)).
			\end{array}\right.
		\end{equation*}
	\end{lemma}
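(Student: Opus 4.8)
The plan is to reformulate the system as a coupled transport–parabolic system and then run a standard fixed-point iteration. First I would use \eqref{1.1} and \eqref{1.2} to reduce the total-energy equation \eqref{1.3} to the internal-energy form
\begin{equation*}
c_v\theta_t + \frac{R\theta}{v}u_x = \tilde\mu\frac{u_x^2}{v} + \left(\tilde\kappa\frac{\theta^\beta\theta_x}{v}\right)_x,
\end{equation*}
so that the unknowns solve $v_t=u_x$, a uniformly parabolic equation for $u$ (note $\gamma=0$ makes $\mu=\tilde\mu$ constant), and a quasilinear parabolic equation for $\theta$. The outer-pressure condition \eqref{1.8} is rewritten as the Neumann-type boundary datum $u_x(0,t)=\frac{R}{\tilde\mu}\bigl(\theta(0,t)-v(0,t)\bigr)$, which together with \eqref{1.9} and \eqref{1.7} furnishes the boundary and far-field data for the linearized problems.

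Next I would set up the iteration. Fix $M>0$ and a short time $T>0$, and let $\mathcal{B}_M$ be the ball of triples $(\bar v,\bar u,\bar\theta)$ with $\bar v-1,\bar u,\bar\theta-1$ bounded in $L^\infty(0,T;H^1(\Omega))$ together with the higher norms appearing in the statement, satisfying $\bar v(\cdot,0)=v_0$, $\bar\theta(\cdot,0)=\theta_0$ and the uniform bounds $\bar v\ge \tfrac12\inf v_0$, $\bar\theta\ge\tfrac12\inf\theta_0$. Define $\mathcal{T}(\bar v,\bar u,\bar\theta)=(v,u,\theta)$ by freezing the coefficients $\bar v,\bar\theta$ (including in the boundary datum $u_x(0,t)=\frac{R}{\tilde\mu}(\bar\theta(0,t)-\bar v(0,t))$), solving the resulting linear parabolic problems for $u$ and $\theta$ by standard linear theory, and recovering $v=v_0+\int_0^t u_x\,ds$ from the continuity equation. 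Since $\bar\theta$ is bounded below, $\tilde\kappa\bar\theta^\beta\ge c>0$, so the $\theta$-equation is uniformly parabolic on $[0,T]$ and the nonlinearity or degeneracy of $\kappa$ causes no difficulty locally.

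I would then verify the two hypotheses of the contraction principle. For self-mapping, linear parabolic energy estimates bound $(v,u,\theta)$ in terms of the data and of $M$; choosing $T$ small lets the $T$-dependent lower-order contributions be absorbed, so $\mathcal{T}(\mathcal{B}_M)\subset\mathcal{B}_M$ for $M$ large (fixed by the data) and $T$ small. Preservation of the positive lower bounds is the delicate point: $v\ge\tfrac12\inf v_0$ follows from $v=v_0+\int_0^t u_x\,ds$ once $T\sup_t\|u_x\|_{L^\infty}$ is small, and $\theta\ge\tfrac12\inf\theta_0$ follows by the parabolic maximum principle after checking that the zeroth-order source $-\frac{R\theta}{v}u_x+\tilde\mu\frac{u_x^2}{v}$ does not drive $\theta$ negative over a short interval. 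For the contraction, I would estimate the difference of two images in the weaker norm $C([0,T];L^2)\cap L^2(0,T;H^1)$, subtracting the linear equations and exploiting that the frozen coefficients are Lipschitz functions of $(\bar v,\bar\theta)$ on the region where these are bounded above and below; the resulting difference estimate carries a factor that is small for small $T$, giving a contraction.

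Finally, the Banach fixed-point theorem yields a unique fixed point $(v,u,\theta)\in\mathcal{B}_M$, which is the desired local strong solution; its membership in $\mathcal{B}_M$ is exactly the regularity asserted in the statement, and uniqueness in the larger class follows from the same difference estimate. The main obstacle I anticipate is bookkeeping the mixed outer-pressure boundary term \eqref{1.8} through the linear parabolic estimates on the unbounded half-line and confirming uniform parabolicity of the $\theta$-equation via the lower bound on $\theta$; once these are in place the argument is routine and follows the references \cite{qq1,qq2,qq3}.
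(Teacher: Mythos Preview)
Your proposal is correct and follows the same approach as the paper: the paper does not give a detailed proof of this lemma but simply asserts that local existence ``can be proved by applying the Banach theorem and the contraction mapping principle (see \cite{qq1,qq2,qq3}).'' Your sketch spells out exactly this contraction-mapping scheme (freezing coefficients, linear parabolic estimates, smallness of $T$, preservation of the lower bounds on $v$ and $\theta$), so there is nothing to add.
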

	Then, after establishing some necessary priori estimates (see  (\ref{2.1}), (\ref{2.28}), (\ref{2.54}), (\ref{2.59}), (\ref{q1}) and (\ref{2.94}) below) where the constants  depend only on the initial data of the problem, we can  extend  the local solution obtained by Lemma \ref{lemma2.1} to the whole interval $[0, \infty)$, which assures the existence of global strong solution in  Theorem \ref{theorem1}.
	
	Without loss of generality, we assume that $\tilde{\mu}=\tilde{\kappa}=R=c_{v}=1$. Throughout
	the paper, $C$ denotes a positive generic constant depending only on $\tilde{\mu}$, $\tilde{\kappa}$, $\beta$, $R$,  $c_{v}$, $\underset{x \in \Omega}{\inf}v_{0}(x)$, $\underset{x \in \Omega}{\inf}\theta_{0}(x)$ and  $\left\|\left(v_{0}-1, u_{0}, \theta_{0}-1\right)\right\|_{H^{1}(\Omega)}$,  which may vary in different cases. And we write $C(\alpha)$ to emphasize that
	$C$ depends on $\alpha$.
	
	We give the following standard energetic estimates.
	\begin{lemma}\label{lemma2.2}
		It holds that
		\begin{equation}
			\begin{aligned}
				&\sup _{0 \leq t<T} \int_{\Omega}\left(\frac{u^2}{2} +(v-\ln v-1)+(\theta-\ln \theta-1)\right) dx
				+ \int_{0}^{T}  V(t) dt\\
				&\quad \quad\leq E_0\triangleq \int_\Omega\left(\frac{u_0^2}{2} +(v_0-\ln v_0-1)+(\theta_0-\ln \theta_0-1)\right)dx,
				\label{2.1}
			\end{aligned}
		\end{equation}
		where
		\begin{equation*}
			V(t)\triangleq \int_{\Omega}\left( \frac{u_{x}^{2}}{v \theta} +  \frac{\theta^\beta \theta_{x}^{2}}{v \theta^{2}} \right)dx.
		\end{equation*}
	\end{lemma}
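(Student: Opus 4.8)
The conceptual route is to recognize \eqref{2.1} as the statement that the physical energy (corrected by the boundary work done by the outer pressure) is conserved while the entropy is produced at rate $V(t)$; equivalently, that the nonnegative functional $\mathcal E\triangleq\frac{u^2}{2}+(v-\ln v-1)+(\theta-\ln\theta-1)$ obeys $\frac{d}{dt}\int_\Omega \mathcal E\,dx=-V(t)$. I would prove this differential identity directly. First, multiplying \eqref{1.2} by $u$ and subtracting the result from \eqref{1.3}, and using \eqref{1.1} together with the normalization $\tilde\mu=\tilde\kappa=R=c_v=1$, $\gamma=0$, I obtain the internal-energy equation $\theta_t+\frac{\theta}{v}u_x=\big(\theta^\beta\theta_x/v\big)_x+\frac{u_x^2}{v}$, and I rewrite \eqref{1.2} as $u_t=-(\theta/v)_x+(u_x/v)_x$.

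Next I would differentiate the three parts of $\mathcal E$ in time, using $v_t=u_x$:
\[
\Big(\tfrac{u^2}{2}\Big)_t=uu_t,\qquad (v-\ln v-1)_t=\big(1-\tfrac1v\big)u_x,\qquad (\theta-\ln\theta-1)_t=\big(1-\tfrac1\theta\big)\theta_t,
\]
then substitute the two evolution equations, integrate over $\Omega=(0,\infty)$, and integrate by parts. Every integration by parts produces a flux contribution at $x=\infty$ and one at $x=0$; the former all vanish because $(v,u,\theta)\to(1,0,1)$ with decaying derivatives, as guaranteed by the $H^1$ regularity in Lemma \ref{lemma2.1}.

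The decisive --- and genuinely problem-specific --- step is the accounting of the boundary terms at $x=0$. The integration by parts in $\int_\Omega uu_t\,dx$ generates the term $u(0,t)\,[\theta/v-u_x/v](0,t)$, which the outer-pressure condition \eqref{1.8}, namely $[-\theta/v+u_x/v](0,t)=-1$, collapses to exactly $u(0,t)$; on the other hand the ``$1$'' in $\int_\Omega(1-\tfrac1v)u_x\,dx$ contributes $\int_\Omega u_x\,dx=-u(0,t)$. These two boundary contributions cancel precisely --- this is the analytic counterpart of the fact that the outer-pressure boundary work is absorbed by the $(v-1)$ term through the relation $u(0,t)=-\frac{d}{dt}\int_\Omega(v-1)\,dx$. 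The one remaining boundary term, coming from the heat flux, vanishes by \eqref{1.9}, since $\theta_x(0,t)=0$. After these cancellations the interior terms $\int \theta u_x/v$, $\int u_x/v$ and $\int u_x^2/v$ also cancel in pairs, and one is left with $\frac{d}{dt}\int_\Omega \mathcal E\,dx=-\int_\Omega\big(\frac{u_x^2}{v\theta}+\frac{\theta^\beta\theta_x^2}{v\theta^2}\big)dx=-V(t)$.

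Finally, integrating this identity from $0$ to $t$ yields $\int_\Omega\mathcal E(\cdot,t)\,dx+\int_0^t V(\tau)\,d\tau=E_0$; since each of $\frac{u^2}{2}$, $v-\ln v-1\ge0$ and $\theta-\ln\theta-1\ge0$ is nonnegative and $V\ge0$, this gives the asserted control by $E_0$ of both the supremum of the energy and the dissipation integral. I expect the only delicate point to be justifying the vanishing of the far-field boundary terms and the integrability of the relative quantities on the unbounded domain; working directly with $v-\ln v-1$ and $\theta-\ln\theta-1$ (rather than with $\int_\Omega(\ln v+\ln\theta)\,dx$, which need not converge) is precisely what sidesteps this issue.
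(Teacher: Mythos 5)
Your proposal is correct and follows essentially the same route as the paper: the paper derives the temperature equation \eqref{2.2}, multiplies \eqref{1.1} by $1-v^{-1}$, \eqref{1.2} by $u$, \eqref{2.2} by $1-\theta^{-1}$, sums to get the pointwise entropy identity, and integrates using \eqref{1.6}--\eqref{1.9} --- exactly your computation written in divergence form. Your explicit verification that the boundary contribution $u(0,t)$ from the outer-pressure condition \eqref{1.8} cancels against $\int_\Omega u_x\,dx=-u(0,t)$ is precisely the step the paper leaves implicit in the phrase ``combined with \eqref{1.6}--\eqref{1.9}''.
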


	\begin{proof}
		Using (\ref{1.1}), (\ref{1.2}) and (\ref{1.4}), we rewrite (\ref{1.3})  as
		
		\begin{equation}
			\theta_{t}+\frac{\theta}{v} u_{x}=\left(\frac{\theta^\beta \theta_{x}}{v}\right)_{x}+ \frac{u_{x}^{2}}{v}.
			\label{2.2}
		\end{equation}
		Multiplying (\ref{1.1}) by $1-v^{-1}$, (\ref{1.2}) by $u$, (\ref{2.2}) by $1-\theta^{-1}$, and adding them altogether, we have
		\begin{equation*}
			\begin{aligned}
				&\left(\frac{u^2}{2}+(v-\ln v-1)+(\theta-\ln \theta-1)\right)_{t}+ \frac{u_{x}^{2}}{v \theta}+ \frac{\theta^\beta \theta_{x}^{2}}{v \theta^{2}} \\
				&=\left(\frac{ u u_{x}}{v}-\frac{ u \theta}{v}\right)_{x}+ u_{x}+\left(\left(1-\theta^{-1}\right) \frac{\theta^\beta \theta_{x}}{v}\right)_{x},
			\end{aligned}
		\end{equation*}
		which combined with  (\ref{1.6})-(\ref{1.9}) gives (\ref{2.1}) and hence finishes the proof of Lemma \ref{lemma2.2} .
	\end{proof}
	
	Now, we get the uniform (with respect to time) upper and lower bounds of $v$.
	\begin{lemma}\label{lemma2.3} There exists a positive constant $C$ such that for any $(x,t)\in \Omega\times [0,\infty)$,
		\begin{equation}
			C^{-1}\leq v(x,t) \leq C.
			\label{2.3}
		\end{equation}	
	\end{lemma}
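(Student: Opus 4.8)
The plan is to derive a pointwise representation of $v$ and then read the two-sided bound \eqref{2.3} directly off it. First I would rewrite the momentum equation \eqref{1.2}: since $\mu=R=1$ and $u_x/v=(\ln v)_t$ by \eqref{1.1}, \eqref{1.2} reads $u_t+(\theta/v)_x=(\ln v)_{tx}$. Integrating this in $x$ over $(0,x)$ and invoking the outer pressure boundary condition \eqref{1.8} --- which fixes $(u_x/v-\theta/v)(0,t)=-1$ and thereby cancels the otherwise uncontrolled boundary flux $(\theta/v)(0,t)$ --- gives
\[
(\ln v)_t(x,t)=\frac{d}{dt}\int_0^x u\,dy+\frac{\theta}{v}(x,t)-1 .
\]
Integrating once more in $t$ and exponentiating yields the local expression
\[
v(x,t)=D(x,t)\Big(1+\int_0^t\frac{\theta(x,s)}{D(x,s)}\,ds\Big),\qquad
D(x,t)=v_0(x)\exp\Big(\int_0^x(u-u_0)\,dy-t\Big),
\]
which is the representation the lemma alludes to.

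Next I would bound the known factor $D$. Its exponent splits into the decaying term $-t$ and the spatial integral $\int_0^x(u-u_0)\,dy$, and the point is to bound the latter uniformly in $(x,t)$ using only the energetic estimate \eqref{2.1}. Integrating \eqref{1.2} over $\Omega$ and using \eqref{1.7} and \eqref{1.8} shows that the total momentum is conserved, $\int_\Omega u\,dx=\int_\Omega u_0\,dx$, so that $\int_0^x(u-u_0)\,dy=-\int_x^\infty(u-u_0)\,dy$ vanishes as $x\to0$ and as $x\to\infty$; combined with the uniform bound $\|u\|_{L^2(\Omega)}^2\le 2E_0$ from \eqref{2.1} this gives $\big|\int_0^x(u-u_0)\,dy\big|\le C$, whence $C^{-1}e^{-t}\le D(x,t)\le Ce^{-t}$.

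With $D$ controlled, I would obtain the upper bound first: writing the representation in convolution form $v(x,t)=D(x,t)+\int_0^t\frac{D(x,t)}{D(x,s)}\,\theta(x,s)\,ds$ and using $\frac{D(x,t)}{D(x,s)}\le Ce^{-(t-s)}$, matters reduce to controlling $\int_0^t e^{-(t-s)}\theta(x,s)\,ds$ \emph{without} any a priori pointwise bound on $\theta$, relying only on the entropy content of \eqref{2.1} in the Kazhikhov--Shelukhin manner adapted in \cite{LX,y}. For the lower bound, positivity gives $v\ge D\ge C^{-1}e^{-t}$, which settles short times; the time-independent lower bound then follows by showing that $\int_0^t\frac{\theta}{D}\,ds$ grows like $e^{t}$, counteracting the decay of $D$, which I would extract from the same convolution form together with the temperature dissipation in \eqref{2.1} and the upper bound just obtained.

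The main obstacle --- and the place where the argument departs from \cite{LX} and \cite{y} --- is the outer pressure boundary condition \eqref{1.8}. It both drives the representation (supplying the constant $-1$ that removes $(\theta/v)(0,t)$) and must be treated so that every constant stays independent of $T$; the delicate coupling is that no pointwise bound on $\theta$ is yet available, so the temperature contribution to the representation has to be closed purely from the energetic and entropy information in \eqref{2.1} in a self-improving manner. The unbounded spatial domain, which is what makes the tail estimate for $\int_0^x(u-u_0)\,dy$ nontrivial, is the secondary difficulty; both are handled by the modifications of \cite{LX,y} indicated in the paper's outline.
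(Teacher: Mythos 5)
There is a genuine gap at the step where you bound the exponent of $D$. You claim that momentum conservation, $\int_\Omega u\,dx=\int_\Omega u_0\,dx$, together with $\|u\|_{L^2(\Omega)}^2\le 2E_0$ yields $\bigl|\int_0^x(u-u_0)\,dy\bigr|\le C$ uniformly in $(x,t)$. This inference is invalid on the unbounded domain: the energy estimate \eqref{2.1} controls only the $L^2$-norm of $u$, and an $L^2(0,\infty)$ function need not be integrable, so neither $\int_\Omega u\,dx$ nor the tail $\int_x^\infty(u-u_0)\,dy$ is even well defined in general. Concretely, if $u-u_0$ behaves like $(1+y)^{-3/4}$ at infinity (which is compatible with $u-u_0\in H^1(\Omega)$ and with all the a priori bounds available at this stage), then $\int_0^x(u-u_0)\,dy\sim 4x^{1/4}\to\infty$, so your factor $D(x,t)e^{t}$ degenerates as $x\to\infty$ and both the upper and lower bounds you read off the representation collapse. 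The "momentum conservation" identity cannot rescue this, because the total momentum is infinite in such a configuration. Your subsequent steps (controlling $\int_0^t e^{-(t-s)}\theta\,ds$, the lower bound for large times) are also only sketched by appeal to \cite{LX,y}, but the $D$-bound is the concrete failure point.

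This is exactly why the paper does \emph{not} anchor the representation at the boundary $x=0$ (despite the attraction that \eqref{1.8} gives $\sigma(0,t)\equiv-1$ exactly). Instead it localizes: for each $x$ it sets $i=[x]$ and integrates $u_t=\sigma_x$ only over $[i,x]$, so the spatial integral $\int_i^x(u-u_0)\,dy$ runs over an interval of length at most $1$ and is bounded by Cauchy--Schwarz and \eqref{2.1} alone, giving \eqref{2.8} uniformly in $i$. The price is that $\sigma(i,t)$ is no longer known explicitly, so the factor $Y_i(t)=\exp\{\int_0^t\sigma(i,s)\,ds\}$ must be estimated indirectly: integrating \eqref{2.4} over the unit cell $(i,i+1)\times(s,t)$ and using the entropy dissipation and the Jensen-type bounds \eqref{2.7} yields the exponential decay \eqref{2.13}, which replaces your exact factor $e^{-t}$. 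The rest of the paper's proof (pointwise control of $\theta$ by $V(t)\max v$ in \eqref{2.11}, Gronwall for the upper bound, and the large-time positivity argument \eqref{2.16}) could in principle be grafted onto a correct representation, but without the localization your representation itself is not uniformly controlled, so the approach as proposed fails.
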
	
	\begin{proof} For any $x\in \Omega$ and $i=[x]$. Setting
		\begin{equation*}
			\sigma(x,t)\triangleq\frac{ u_{x}}{v} - \frac{\theta}{v} = (\ln v)_t - \frac{\theta}{v},
		\end{equation*}
		we rewrite (\ref{1.2}) as
		$$ u_t=\sigma_x.$$
		
		Integrating this with respect to $x$ over $[i,x]$ leads to
		\begin{equation}\label{2.4}
			\left(\int_{i}^{x} u d y\right)_{t}=\sigma(x, t)-\sigma(i, t),
		\end{equation}
		which implies
		\begin{equation*}
			\int_i^x(u(y,t)-u_0(y))dy=\ln v-\ln v_0-\int_0^t\frac{\theta}{v}ds-\int_0^t\sigma(i,s)ds.
		\end{equation*}
		Then,  we have
		\begin{equation}
			v(x,t)=D_i(x, t) Y_i(t) \exp \left\{ \int_{0}^{t} \frac{\theta}{v} d s\right\},
			\label{2.5}
		\end{equation}
		where
		\begin{equation*}
			D_i(x, t)\triangleq v_0(x)\exp\left\{\int_{i}^{x} (u(y,t)-u_{0}) dy\right\},
		\end{equation*}
		and
		\begin{equation}
			Y_i(t)\triangleq \exp \left\{\int_{0}^{t}\sigma(i,s) ds\right\}.
			\label{Y}
		\end{equation}
		
		Setting
		\begin{equation*}
			g\triangleq\int_{0}^{t}  \frac{\theta}{v} ds,
		\end{equation*}
		direct calculation  gives
		\begin{equation*}
			g_{t}= \frac{\theta(x,t)}{v(x,t)}=\frac{ \theta(x,t)}{D_i(x,t)Y_i(t) \exp\{g\}},
		\end{equation*}
		which leads to
		\begin{equation*}
			\exp\{g\}=\int_{0}^{t} \frac{\theta(x,s)}{D_i(x,s)Y_i(s)} d s +1.
		\end{equation*}
		
		Putting this into (\ref{2.5}) yields
		\begin{equation}
			v(x, t)=D_i(x, t) Y_i(t)+\int_{0}^{t} \frac{D_i(x, t) Y_i(t) \theta(x, s)}{D_i(x, s) Y_i(s)}ds.
			\label{2.6}
		\end{equation}
		Now, we estimate $v(x,t)$ as follows.
		
		First, we have by (\ref{2.1})
		\begin{equation*}
			\int_{i}^{i+1}(v-\ln v-1 )+ (\theta-\ln\theta-1)dx\leq E_0,
		\end{equation*}
		which combined  with Jensen's inequality gives
		\begin{equation}
			\alpha_{1} \leq \int_{i}^{i+1} v(x, t) dx\leq \alpha_{2}, \,\,\, \alpha_{1} \leq \int_{i}^{i+1} \theta(x, t)dx \leq \alpha_{2},
			\label{2.7}
		\end{equation}
		where $0<\alpha_1<\alpha_2$ are two roots of
		\begin{equation*}
			y-\ln y -1 = e_0.
		\end{equation*}
		Thus, by (\ref{2.1}) and Cauchy's inequality, we obtain
		\begin{equation*}
			\left|\int^{x}_{i} (u(y,t)-u_0(y)) dy\right|\leq \left(\int_{i}^{i+1} u^2 dy\right)^{\frac{1}{2}}+ \left(\int_{i}^{i+1} u_0^2 dy\right)^{\frac{1}{2}} \leq C,
		\end{equation*}
		which  implies
		\begin{equation}
			C^{-1}\leq D_i(x,t) \leq C,
			\label{2.8}
		\end{equation}
		where  $C$ is a constant independent of $i$ and $T$.
		
		For $0\le s<t\le T$, integrating (\ref{2.4}) over $(i,i+1)\times (s,t)$ leads to
		\begin{equation}
			\begin{aligned}
				\int_{s}^{t}\sigma(i,\tau)d\tau
				&\le \int_{s}^{t} \int_{i}^{i+1}\left(\frac{u_x}{v}
				-\frac{\theta}{v}\right)dxd\tau +C\\
				&\le C\int_{s}^{t}\int_{i}^{i+1}\frac{u^2_x}{v\theta}dxd\tau - \frac{1}{2}\int_{s}^{t}\int_{i}^{i+1} \frac{\theta}{v}dxd\tau +C\\
				&\le C-\frac{1}{2}\int_{s}^{t}\int_{i}^{i+1} \frac{\theta}{v}dxd\tau,
				\label{2.9}
			\end{aligned}
		\end{equation}
		where in the last inequality we have used (\ref{2.1}).
		
		Then, for
		\begin{equation*}
			\bar{\theta}_i(t)\triangleq \int_{i}^{i+1}\theta(x,t)dx,
		\end{equation*}
		we have by (\ref{2.7})
		\begin{equation}
			\begin{aligned}
				|\theta^{1/2}(x,t)-\bar{\theta}_i^{1/2}(t)| &\le C|\theta^\frac{\beta+1}{2}(x,t)-\bar{\theta}_i^\frac{\beta+1}{2}(t)| \\
				&\le C\int_{i}^{i+1}\theta^{\frac{\beta-1}{2}}|\theta_x|dx \\
				&\le C\left(\int_{i}^{i+1}\frac{\theta^\beta\theta_x^2}{\theta^2v}dx\right)^{1/2}\left(\int_{i}^{i+1}\theta vdx\right)^{1/2} \\
				&\le CV^{1/2}(t)\max_{x\in [i,i+1]}v^{1/2}(x,t),
				\label{2.10}
			\end{aligned}
		\end{equation}
		which gives
		\begin{equation}
			\frac{\alpha_1}{4}-CV(t)\max_{x\in [i,i+1]}v(x,t)\le \theta(x,t)\le C+ CV(t)\max_{x\in [i,i+1]}v(x,t).
			\label{2.11}
		\end{equation}
		
		Moreover, it follows from (\ref{2.7}) and Jensen's inequality  that
		\begin{equation*}
			\begin{aligned}
				-\int_{i}^{i+1}\frac{\theta}{v}dx &\le \int_{i}^{i+1}((-\theta + \bar{\theta}_i)_+ - \bar{\theta}_i)\frac{1}{v}dx\\
				&\le \left(\max_{x\in [i,i+1]}(\bar{\theta}_i^{\beta/2}-\theta^{\frac\beta2})_+-\bar{\theta}_i\right)\left(\int_{i}^{i+1}vdx\right)^{-1} \\
				&\le C\int_i^{i+1}\theta^{\frac\beta2-1}|\theta_x|dx-2C^{-1}\\
				&\le CV(t)-C^{-1},
			\end{aligned}
		\end{equation*}
		which together with (\ref{Y}) and (\ref{2.1}) implies
		\begin{equation}
			0 \leq Y_i(t) \leq  C e^{-t/C}, \quad \frac{Y_i(t)}{Y_i(s)} \leq C e^{-(t-s) / C}. 
			\label{2.13}
		\end{equation}	
		Putting (\ref{2.8}), (\ref{2.13}) and  (\ref{2.11}) into (\ref{2.6}) yields
		\begin{equation*}
			\begin{aligned}
				v(x,t)
				&\le Ce^{-t/C} + C\int_{0}^{t}e^{-(t-s)/C}\left(1 + V(s)\max_{x\in [i,i+1]}v(x,s)\right)ds \\
				&\le C + C\int_{0}^{t}V(s)\max_{x\in [i,i+1]}v(x,s)ds,
			\end{aligned}
		\end{equation*}
		which together with Gronwall's inequality and (\ref{2.1}) proves that for any $t \in [0,\infty]$,
		\begin{equation*}
			\max_{x\in [i,i+1]}v(x,t)\le C.
		\end{equation*}
		Since $x$ is arbitrary, we arrive at
		\begin{equation}
			v(x,t) \le C,
			\label{2.14}
		\end{equation}
		for all $(x,t) \in \Omega \times [0,\infty)$.
		
		Then, integrating \eqref{2.6} in $x$ over $(i,i+1)$ and   by (\ref{2.1}), (\ref{2.7})-(\ref{2.8}) and (\ref{2.11})-(\ref{2.13}) yields
		\begin{equation}
			\alpha_1 \le \int_{i}^{i+1}v(x,t)dx \le CY_i(t) + C\int_{0}^{t}\frac{Y_i(t)}{Y_i(s)} ds.
			\label{2.15}
		\end{equation}
	Combining this with   (\ref{2.6})-(\ref{2.8}), (\ref{2.11})-(\ref{2.14}) gives  that there exists  $T_0$ independent of $T$ such that for $t\geq T_0$,
		\begin{equation}
			\begin{aligned}
				v(x,t)
				&\ge C^{-1} \int_{0}^{t}\frac{Y_i(t)}{Y_i(s)}\left(\frac{\alpha_1}{4} - CV(s)\right)ds \\
				&\ge C_0 - Ce^{-t/C} - C\int_{0}^{t}e^{-(t-s)/C}V(s)ds \\
				&\ge \frac{C_0}{2},
				\label{2.16}
			\end{aligned}
		\end{equation}
		where in the last inequality we have used
		\begin{equation*}
			\begin{aligned}			
				\int_{0}^{t}e^{-(t-s)/C}V(s)ds
				&=\int_{0}^{\frac{t}{2}}e^{-(t-s)/C}V(s)ds + \int_{\frac{t}{2}}^{t}e^{-(t-s)/C}V(s)ds\\
				&\le e^{-t/(2C)}\int_{0}^{\frac t2}V(s)ds + \int_{\frac{t}{2}}^{t}V(s)ds \rightarrow 0,\quad \text{as} \,\,\, t\rightarrow \infty.
			\end{aligned}
		\end{equation*}
		
		Finally,  by (\ref{2.15})  and Gronwall's inequality, we have  for all $t\in [0,T_0]$,
		$$ Y_i(t)\ge (CT_0e^{CT_0})^{-1}.$$
		Hence, by (\ref{2.6}) and  (\ref{2.16}),  there exists some positive constant $C$ such that
		$$v(x,t)\ge C^{-1},$$
		for all $(x,t) \in \Omega\times (0,\infty)$. Combining this and (\ref{2.14})  gives (\ref{2.3}) and  the proof of Lemma \ref{lemma2.3} is completed.
	\end{proof}
	
	For further discussion,   we denote for $a\ge 0$,
	$$(\theta>a)(t) \triangleq \{x \in \Omega \mid \theta(x, t)>a\} ,\,\,\, (\theta<a)(t)\triangleq \{x \in \Omega \mid \theta(x, t)< a\}.$$
	When  $\alpha>1$, we derive from (\ref{2.1}) that
	\begin{equation}
		\sup_{0\le t <\infty}\int_{ ({\theta>\alpha})(t)} \theta dx \le C(\alpha)\sup_{0\le t <\infty}\int_{ {\Omega}}(\theta - \ln \theta -1)dx \le C(\alpha),
		\label{2.21}
	\end{equation}
	and that
	\begin{equation}
		\sup_{0\le t <\infty}|(\theta<\alpha^{-1})(t)|+|(\theta>\alpha)(t)|\leq C(\alpha).
		\label{2.22}
	\end{equation}
	
	Similar to the proof of   Lemma 2.2 of \cite{LX}, we have  
	\begin{lemma}\label{lemma2.5}
		For any $p\geq 1$, there exists some positive constant $C(p)$ such that
		\begin{equation}
			\int_{0}^{T}\int_{\Omega}\frac{\theta^\beta \theta^2_x}{\theta^{p+1}}dxdt + \int_{0}^{T}\int_{\Omega}\frac{u^2_x}{\theta^p}dxdt \le C(p).
			\label{2.19}
		\end{equation}
	\end{lemma}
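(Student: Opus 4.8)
The plan is to derive \eqref{2.19} by testing the temperature equation \eqref{2.2} against the centered multiplier $1-\theta^{-p}$ and integrating over $\Omega\times(0,T)$, in the spirit of Lemma~2.2 of \cite{LX}. The base case $p=1$ requires no work: by the $v$-bounds \eqref{2.3} one has $\theta^{\beta}\theta_x^2/\theta^{2}\le C\theta^{\beta}\theta_x^2/(v\theta^2)$ and $u_x^2/\theta\le Cu_x^2/(v\theta)$, so the left-hand side of \eqref{2.19} is dominated by $C\int_0^T V(t)\,dt\le CE_0$ via \eqref{2.1}. For $p>1$ the multiplier $1-\theta^{-p}$ is chosen so that the associated entropy $\psi_p$, with $\psi_p'=1-\theta^{-p}$ normalized by $\psi_p(1)=\psi_p'(1)=0$, is nonnegative and vanishes quadratically at the far-field value $\theta=1$. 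This quadratic vanishing is exactly what makes $\int_\Omega\psi_p(\theta(t))\,dx$ convergent on the unbounded domain (using $\theta-1\in L^2$) and makes $\int_\Omega\psi_p(\theta_0)\,dx$ finite (using $\inf_x\theta_0>0$), so that only the initial value contributes to the upper bound when the time-derivative term is integrated.

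Integrating the diffusion term by parts and discarding the boundary contributions via the Neumann condition $\theta_x(0,t)=0$ of \eqref{1.9} and the far-field decay $\theta_x\to0$ of \eqref{1.7}, one gains the dissipation $p\int_\Omega\theta^{\beta-p-1}\theta_x^2/v\,dx$; the viscous-heating term $(1-\theta^{-p})u_x^2/v$ produces the wanted $\int_\Omega u_x^2/(v\theta^p)\,dx$ together with a remainder $\int_\Omega u_x^2/v\,dx$. The crucial step is to combine this remainder with the convective term: writing $\int_\Omega u_x(u_x-\theta)/v\,dx=\int_\Omega u_x\sigma\,dx$ with $\sigma=(u_x-\theta)/v$ and integrating by parts through $u_t=\sigma_x$, one finds $\int_\Omega u_x\sigma\,dx=[u\sigma]_{x=0}^{x=\infty}-\tfrac12\tfrac{d}{dt}\int_\Omega u^2\,dx$. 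Here the outer-pressure condition \eqref{1.8} enters precisely through $\sigma(0,t)=-R=-1$, so the boundary term equals $u(0,t)$; combined with $u(0,t)=-\int_\Omega u_x\,dx$ (since $u\to0$ at infinity by \eqref{1.7}) this merges with the leftover source $\int_\Omega\theta^{1-p}u_x/v\,dx$ into the single, nicely centered term $\int_\Omega(\theta^{1-p}/v-1)u_x\,dx$, whose weight decays at the far field.

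It remains to control this source, and that is where I expect the main difficulty. By Cauchy--Schwarz against the energy dissipation one bounds it by $V^{1/2}(t)\bigl(\int_\Omega(\theta^{1-p}/v-1)^2 v\theta\,dx\bigr)^{1/2}$, after which $\int_0^T V\,dt\le E_0$ takes care of the time integration. The surviving temperature weight is harmless near the far field (it vanishes quadratically there, controlled by \eqref{2.1}), but on the set where $\theta$ is large it behaves like a positive power of $\theta$ that is not dominated by the energy alone; since no pointwise upper or lower bound on $\theta$ is yet available, I would split $\Omega$ into $\{\theta\le\alpha\}$ and $\{\theta>\alpha\}$ and dominate the large-$\theta$ part using the integral and measure bounds \eqref{2.21}--\eqref{2.22} together with the already-gained dissipation, closing the argument by induction on $p$ with base case $p=1$. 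The hard point is thus the large-$\theta$ region: the negative-power estimate \eqref{2.19} must be bootstrapped in $p$ precisely because $\theta$ cannot yet be bounded from above or below.
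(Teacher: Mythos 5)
The paper does not actually write out a proof of Lemma \ref{lemma2.5}; it invokes Lemma 2.2 of \cite{LX}. Your setup is the natural one and is likely close in spirit to that reference: the multiplier $1-\theta^{-p}$, whose convex primitive $\psi_p$ (normalized by $\psi_p(1)=\psi_p'(1)=0$) vanishes quadratically at the far-field value, together with your handling of the boundary terms via $\theta_x(0,t)=0$ from \eqref{1.9}, $u_t=\sigma_x$ and $\sigma(0,t)=-1$ from \eqref{1.8}, correctly yields
\begin{equation*}
\frac{d}{dt}\int_\Omega\Bigl(\psi_p(\theta)+\frac{u^2}{2}\Bigr)dx
+\int_\Omega\Bigl(p\,\frac{\theta^{\beta-p-1}\theta_x^2}{v}+\frac{u_x^2}{v\theta^p}\Bigr)dx
=\int_\Omega\Bigl(\frac{\theta^{1-p}}{v}-1\Bigr)u_x\,dx .
\end{equation*}
The gap is in how you close this identity. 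You bound the right-hand side by $V^{1/2}(t)\bigl(\int_\Omega(\frac{\theta^{1-p}}{v}-1)^2v\theta\,dx\bigr)^{1/2}$ and claim that ``$\int_0^T V\,dt\le E_0$ takes care of the time integration.'' It does not: the complementary factor is at best bounded \emph{uniformly} in $t$, so Cauchy--Schwarz in time gives $\int_0^T V^{1/2}\cdot O(1)\,dt\le E_0^{1/2}(CT)^{1/2}$, a bound growing like $\sqrt{T}$ (Young's inequality gives $CT$ instead). To make the energy dissipation ``take care'' of the time integral you would need $\int_0^T\int_\Omega W(v,\theta)\,dxdt\le C$ for a nonnegative weight $W$ vanishing only at $(v,\theta)=(1,1)$ --- but no such space-time bound exists at this stage of the argument; it is essentially a decay statement of the kind only obtained much later (Lemma \ref{lemma 2.11}). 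This is not a fixable technicality inside your scheme: the source term is exactly the crux of the lemma.

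Two further points. First, the $v$-part of your merged source can only be handled structurally, not by Cauchy--Schwarz: by \eqref{1.1}, $\int_\Omega\frac{(1-v)u_x}{v}dx=-\frac{d}{dt}\int_\Omega(v-\ln v-1)dx$ is an exact derivative controlled by \eqref{2.1}; lumping it into the quadratic estimate throws this away and the argument then fails for every $p>1$, leaving the genuinely hard piece $\int_0^T\int_\Omega\frac{(\theta^{1-p}-1)u_x}{v}dxdt$. Second, your diagnosis of where the difficulty sits is inverted: for $p\ge1$ one has $\theta^{1-p}\le 1$ on $\{\theta>2\}$, so there the weight is $\le C\theta$ and is exactly what \eqref{2.21} controls; the dangerous region is where $\theta$ is \emph{small}, since $\theta^{1-p}$ blows up there and no positive lower bound for $\theta$ is available before Lemma \ref{lemma2.10} --- and there neither the finite-measure bound \eqref{2.22} (integrating a constant over such times and sets still produces $CT$) nor your sketched induction on $p$ helps, because each induction step regenerates a complementary term of the same non-time-integrable type (and, quantitatively, Young against the level-$q$ dissipation forces $p\le q/2+1$, so starting from $p=1$ the induction never passes $p=2$). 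A correct proof must exploit additional structure (cancellations, truncated multipliers supported on finite-measure temperature sets, or pointwise-in-time control of $\inf_x\theta$ via \eqref{2.10} on the set where $V(t)$ is small), none of which appears in your proposal.
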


	Next, we state the following key Lemmas \ref{lemma2.6} and \ref{lemma2.7} which will give the $L^2$-norm (in both space and time) bound of both $u_x$ and $\theta^{-1/2}\theta_x$.
	
	\begin{lemma}\label{lemma2.6}
		For $\beta \in (0,1)$, there exists some positive constant C such that for any $T>0$,
		\begin{equation}
			\int_{0}^{T}\int_{{\Omega}}\left(u^2_x + (\theta^{-1} + \theta^\beta)\theta^2_x\right)dxdt\le C.
			\label{2.28}
		\end{equation}
		\begin{proof}
			
			We divide the proof of Lemma \ref{lemma2.6} into three steps.
			
			Step 1. Integrating (\ref{2.2}) multiplied by $(\theta-2)_+\triangleq\max{\{\theta-2,0\}}$ over $\Omega \times (0,T)$ gives
			\begin{equation}
				\begin{aligned}
					&\frac{1}{2}\int_{{\Omega}}(\theta-2)^2_+dx  + \int_{0}^{T}\int_{(\theta>2)(t)}\frac{\theta^\beta \theta^2_x}{v}dxdt \\
					&=  \frac{1}{2}\int_{{\Omega}}(\theta_0-2)^2_+ dx-\int_{0}^{T}\int_{{\Omega}}\frac {\theta}{v}u_x(\theta-2)_+dxdt +\int_{0}^{T}\int_{{\Omega}}\frac{u^2_x}{v}(\theta-2)_+dxdt.
					\label{2.29}
				\end{aligned}
			\end{equation}
			
			To deal with the last term on the right hand side of (\ref{2.29}), motivated by \cite{LL}, we multiply (\ref{1.2}) by $2u(\theta-2)_+$ and integrate the resulting equality over $\Omega \times (0,T)$,
			\begin{equation}
				\begin{aligned}
					&\quad\int_{\Omega} u^{2}(\theta-2)_{+}dx + 2  \int_{0}^{T} \int_{\Omega} \frac{u_{x}^{2}}{v}(\theta-2)_{+} dxdt\\
					&=  \int_{\Omega} u_{0}^{2}\left(\theta_{0}-2\right)_{+}dx + 2 \int_{0}^{T} \int_{\Omega} \frac{\theta}{v} u_{x}(\theta-2)_{+}dxdt+2  \int_{0}^{T} \int_{(\theta>2)(t)} \frac{\theta}{v} u \theta_{x}dxdt \\
					&\quad-2  \int_{0}^{T} \int_{(\theta>2)(t)} \frac{u_{x}}{v} u \theta_{x}dxdt+\int_{0}^{T} \int_{(\theta>2)(t)} u^{2} \theta_{t} dxdt \\
					&\quad- 2\int^{T}_{0} [u(\theta -2)_{+}](0,s)ds . \label{2.30}
				\end{aligned}
			\end{equation}
			Adding (\ref{2.30}) to (\ref{2.29}) and  using (\ref{2.2}) gives that
			\begin{equation}
				\begin{aligned}
					&\int_{\Omega}\left[\frac{1}{2}(\theta-2)_{+}^{2}+u^{2}(\theta-2)_{+}\right]dx   + \int_{0}^{T} \int_{(\theta>2)(t)} \frac{\theta^\beta \theta_{x}^{2}}{v}dxdt +\int_{0}^{T} \int_{\Omega}\frac{u_{x}^{2}}{v}(\theta-2)_+ dxdt \\
					&= \int_{\Omega}\left[\frac{1}{2}\left(\theta_{0}-2\right)_{+}^{2}+u_{0}^{2}\left(\theta_{0}-2\right)_{+}\right]dx + \int_{0}^{T} \int_{\Omega} \frac{\theta}{v} u_{x}(\theta-2)_{+}dxdt \\
					&\quad+ 2\int_{0}^{T} \int_{(\theta>2)(t)} \frac{\theta}{v} u \theta_{x}dxdt -2\int_{0}^{T} \int_{(\theta>2)(t)} \frac{u_{x}}{v} u \theta_{x}dxdt \\
					&\quad+ \int_{0}^{T} \int_{(\theta>2)(t)} u^{2}\left(\frac{u_{x}^{2}}{v}- \frac{\theta}{v} u_{x}\right)dxdt  +\int_{0}^{T} \int_{(\theta>2)(t)} u^{2}\left(\frac{\theta^\beta \theta_{x}}{v}\right)_{x}dxdt\\
					&\quad - 2 \int_{0}^{T}\left[u(\theta-2)_{+}\right](0, t) d t \\
					&\triangleq\int_{\Omega}\left[\frac{1}{2}\left(\theta_{0}-2\right)_{+}^{2}+u_{0}^{2}\left(\theta_{0}-2\right)_{+}\right]dx +  \sum_{i=1}^{6} I_{i}.
					\label{2.31}
				\end{aligned}
			\end{equation}
			
			We estimate each $I_i (i=1,2,\cdots,6)$ as follows.
			
			For $I_1$, we have by  Cauchy's inequality and (\ref{2.3})
			\begin{equation}
				\begin{aligned}
					\left|I_{1}\right| &=\left|\int_{0}^{T} \int_{\Omega} \frac{\theta}{v} u_{x}(\theta-2)_{+}dxdt\right|  \\
					& \leq \frac{1}{2} \int_{0}^{T} \int_{\Omega} \frac{u_{x}^{2}}{v}(\theta-2)_{+}dxdt+C \int_{0}^{T} \int_{\Omega} \theta^{2}(\theta-2)_{+} dxdt\\
					& \leq \frac{1}{2} \int_{0}^{T} \int_{\Omega} \frac{u_{x}^{2}}{v}(\theta-2)_{+}dxdt+C \int_{0}^{T} \int_{\Omega} \theta(\theta-3 / 2)_{+}^{2}dxdt \\
					& \leq \frac{1}{2} \int_{0}^{T} \int_{\Omega} \frac{u_{x}^{2}}{v}(\theta-2)_{+}dxdt+C \int_{0}^{T} \max _{x \in\Omega}(\theta-3 / 2)_{+}^{2}(x, t) dt,
					\label{2.32}
				\end{aligned}
			\end{equation}
			where in the last inequality we have used (\ref{2.21}).
			
			Now, it follows from Cauchy's inequality that for any $\varepsilon>0$,
			\begin{equation}
				\begin{aligned}
					\left|I_{2}\right|+\left|I_{3}\right| &=2 \left|\int_{0}^{T} \int_{(\theta>2)(t)} \frac{\theta}{v}u\theta_{x}dxdt\right| + 2 \left|\int_{0}^{T} \int_{(\theta>2)(t)} \frac{u_{x}}{v}u\theta_{x}dxdt\right| \\
					& \le C\int_{0}^{T} \int_{\Omega} \theta^2_xdxdt + C\int_{0}^{T}\int_{(\theta>2)(t)} u^{2} \theta^{2}dxdt +  C\int_{0}^{T}\int_{\Omega} u^{2} u_{x}^{2} dxdt \\
					&\le C(\varepsilon)\int_{0}^{T}\int_{{\Omega}}\theta^{\beta-2}\theta_x^2dxdt + \varepsilon\int_{0}^{T}\int_{\Omega}\theta^\beta\theta^2_xdxdt + C\int_{0}^{T}\int_{(\theta>2)(t)} u^{2} \theta^{2}dxdt\\
					&\quad + C\int_{0}^{T}\int_{{\Omega}}u^2_xdxdt +  C\int_{0}^{T}\int_{\Omega}\left|u\right|^{2/(1-\beta)}u^2_xdxdt\\
					&\le C(\varepsilon) + \varepsilon\int_{0}^{T}\int_{\Omega}\theta^\beta\theta^2_xdxdt + C\int_{0}^{T}\sup_{x \in\Omega}(\theta-3/2)^2_+(x,t)dt \\
					&\quad + C\int_{0}^{T}\int_{{\Omega}}u^2_xdxdt + C\int_{0}^{T}\int_{\Omega}\left|u\right|^{2/(1-\beta)}u^2_xdxdt,
					\label{2.33}
				\end{aligned}
			\end{equation}
			where in the last inequality we have used
			
			\begin{equation}
				\begin{aligned}
					\int_{0}^{T}\int_{(\theta>2)(t)}u^2\theta^2dxdt &\le 16\int_{0}^{T}\int_{{\Omega}}u^2(\theta-3/2)^2_+dxdt \\
					&\le 16\int_{0}^{T}\sup_{x \in\Omega}(\theta-3/2)^2_+\int_{{\Omega}}u^2dxdt\\
					&\le C\int_{0}^{T}\sup_{x \in\Omega}(\theta-3/2)^2_+(x,t)dt,
					\label{2.34}
				\end{aligned}
			\end{equation}
			due to (\ref{2.1}).
			
			Then, combining  Cauchy's inequality and (\ref{2.34}) yields that
			\begin{equation}
				\begin{aligned}
					\left|I_{4}\right| &= \int_{0}^{T} \int_{(\theta>2)(t)} u^{2}\left(\frac{u_{x}^{2}}{v}- \frac{\theta}{v} u_{x}\right)dxdt\\
					&\le C\int_{0}^{T}\int_{{(\theta>2)}(t)}u^2u_x^2dxdt + C\int_{0}^{T}\int_{{(\theta>2)}(t)}u^2\theta^2dxdt\\
					&\le C\int_{0}^{T}\int_{{\Omega}}u^2_xdxdt + C\int_{0}^{T}\int_{\Omega}\left|u\right|^{2/(1-\beta)}u^2_xdxdt\\
					&\quad + C\int_{0}^{T}\sup_{x \in\Omega}(\theta-3/2)^2_+(x,t)dt .
					\label{2.35}
				\end{aligned}
			\end{equation}
			
			To estimate $I_5$, we set  $\eta>0$ and
			\begin{equation*}
				\varphi_{\eta}(\theta) := \begin{cases}1, & \theta-2>\eta, \\ (\theta-2) / \eta, & 0 \leq \theta-2 \leq \eta, \\ 0, & \theta-2 \leq 0.
				\end{cases}
			\end{equation*}
			By Lebesgue's dominated convergence theorem, for $\beta<1$ and any $\varepsilon>0$, we have 
			\begin{equation}
				\begin{aligned}
					I_{5} &= \lim _{\eta \rightarrow 0+} \int_{0}^{T} \int_{\Omega}  \varphi_{\eta}(\theta) u^{2}\left(\frac{\theta^\beta\theta_{x}}{v}\right)_{x} dxdt\\
					&= \lim _{\eta \rightarrow 0+} \int_{0}^{T} \int_{\Omega}\left(-2 \varphi_{\eta}(\theta) u u_{x} \frac{\theta^\beta \theta_{x}}{v}-\varphi_{\eta}^{\prime}(\theta) u^{2} \frac{\theta^\beta \theta_{x}^{2}}{v}\right)dxdt \\
					& \leq-2 \int_{0}^{T} \int_{(\theta>2)(t)} u u_{x} \frac{\theta^\beta \theta_{x}}{v}dxdt \\
					&\le C(\varepsilon)\int_{0}^{T}\int_{{(\theta>2)}(t)}u^2u_x^2\theta^\beta dxdt + \varepsilon\int_{0}^{T}\int_{{(\theta>2)}(t)}\theta^\beta \theta_x^2dxdt\\
					&\le C(\varepsilon) \int_{0}^{T}\int_{{\Omega}}\left|u\right|^{2/(1-\beta)}u^2_xdxdt + \varepsilon\int_{0}^{T}\int_{{\Omega}}(u_x^2\theta + \theta^\beta \theta_x^2)dxdt,
				\end{aligned} \label{2.36}
			\end{equation}
			where in the third inequality we have used $\varphi_\eta'(\theta)\geq 0$.	
			
			Finally,  Cauchy's inequality and (\ref{2.1}) lead to
			\begin{equation}
				\begin{aligned}
					\left|I_{6}\right| &
					=2\left|\int_0^T\left[u(\theta-2)_+\right](0,t)dt\right|\\
					&\leq C\int_{0}^{T} \max_{x\in\Omega} \left|u  (\theta-3/2)^{3/2}_{+}\right| d t \\
					& \leq C\int_{0}^{T}  \max_{x\in\Omega}   u^4d t+C \int_{0}^{T}\max_{x\in\Omega} (\theta-3/2)^{2}_{+}  d t \\
					& \leq C \int_{0}^{T} \int_{\Omega} u^2_{x}  d x d t +C  \int_{0}^{T} \max _{x \in \Omega}(\theta-3 / 2)_{+}^{2} d t    ,
				\end{aligned}
				\label{2.37}
			\end{equation}
			where in the  last inequality we have used the following  fact that
			\begin{equation}
				\begin{aligned}
					\max_{x\in\Omega}u^2(x,t)&\le \max_{x \in \Omega}\left(-2\int_{x}^{\infty}u(y)u_x(y)dy\right)\\
					&\le 2\|u\|_{L^2(\Omega)}\left\|u_{x}\right\|_{L^{2}(\Omega)}\\&\le C\|u_x\|_{L^2(\Omega)},
				\end{aligned}
				\label{2.38}
			\end{equation}
			due to (\ref{2.1}).	
			
			Notice that
			\begin{equation*}
				\begin{aligned}
					&\int_{0}^{T}\int_{\Omega}\left(u^2_x\theta + \theta^\beta\theta^2_x\right)dxdt \\
					&\le \int_{0}^{T}\int_{{(\theta>3)(t)}}\left(u^2_x\theta + \theta^\beta\theta^2_x\right)dxdt + \int_{0}^{T}\int_{{(\theta<4)(t)}}\left(u^2_x\theta + \theta^\beta\theta^2_x\right)dxdt \\
					&\le C\int_{0}^{T}\int_{{(\theta>3)(t)}}\left(u^2_x(\theta-2)_+ +\theta^\beta\theta^2_x\right)dxdt + C\int_{0}^{T}\int_{{(\theta<4)(t)}}\left(\frac{u_x^2}{\theta}+ \frac{\theta^\beta\theta^2_x}{\theta^2}\right)dxdt \\
					&\le C\int_{0}^{T}\int_{(\theta>2)(t)}\frac{1}{v}\left(u^2_x(\theta-2)_+ + \theta^\beta\theta^2_x\right)dxdt + C,
				\end{aligned}
			\end{equation*}
			where in the third inequality we have used (\ref{2.3}) and (\ref{2.1}).
			
			Putting (\ref{2.32}), (\ref{2.33}), (\ref{2.35})-(\ref{2.37}) into (\ref{2.31}) and choosing $\varepsilon$ suitably small, we  have
			\begin{equation}
				\begin{aligned}
					& \sup _{0 \leq t \leq T} \int_{\Omega}(\theta-2)_{+}^{2}dx+\int_{0}^{T} \int_{\Omega}\left(u_{x}^{2} \theta+\theta^\beta\theta_{x}^{2}\right)dxdt \\
					&\leq  C+ C \int_{0}^{T} \max _{x \in \Omega}(\theta-3 / 2)_{+}^{2}(x, t)dt + C\int_{0}^{T}\int_{{\Omega}}u_x^2dxdt\\
					& \quad +C_{2} \int_{0}^{T} \int_{\Omega}\left|u\right|^{2/({1-\beta)}}u^2_xdxdt\\
					&\leq C +  C \int_{0}^{T} \max _{x \in \Omega}(\theta-3 / 2)_{+}^{2}(x, t)dt +C_{2} \int_{0}^{T} \int_{\Omega}\left|u\right|^{2/({1-\beta)}}u^2_xdxdt   ,
					\label{2.40}
				\end{aligned}
			\end{equation}
			where  in the last inequality we have used the following  fact that for any $\delta>0$,
			\begin{equation}
				\begin{aligned}
					\int_{0}^{T}\int_{{\Omega}}u^2_xdxdt &\le \delta\int_{0}^{T}\int_{{\Omega}}\theta u^2_xdxdt +C(\delta)\int_{0}^{T}\int_{{\Omega}}\frac{u^2_x}{\theta}dxdt \\
					&\le \delta\int_{0}^{T}\int_{{\Omega}}\theta u^2_xdxdt+C(\delta),
					\label{2.41}
				\end{aligned}
			\end{equation}
			due to Cauchy's inequality, (\ref{2.1}) and (\ref{2.3}).
			
			Step 2. To estimate the last term on the right hand side of (\ref{2.40}), we rewrite (\ref{1.2}) as
			\begin{equation}
				u_t - \left(\frac{u_x-\theta+ v}{v}\right)_x = 0.
				\label{a1}
			\end{equation}
			Multiplying (\ref{a1}) by $|u|^\alpha u(\alpha = 2/(1-\beta))$ and integrating the resulting equality over $\Omega \times (0,T)$, we  get
			\begin{equation}
				\begin{aligned}
					&\frac{1}{\alpha+2}\int_{{\Omega}}|u|^{\alpha+2}dx + (\alpha+1)\int_{0}^{T}\int_{\Omega}\frac{|u|^{\alpha} u^2_x}{v}dxdt \\
					& \le \frac{1}{\alpha+2}\int_{{\Omega}}|u_0|^{\alpha+2}dx + \int_{0}^{T}\int_{{\Omega}}\left(\frac{u_x- \theta + v }{v}|u|^{\alpha}|u_x|\right)_xdxdt\\
					& \quad+ (\alpha+1)\int_{0}^{T}\int_{{\Omega}}\left(\frac{|1-v|}{v} +  \frac{|\theta-1|}{v}\right)|u|^{\alpha}|u_x|dxdt\\
					&\le \frac{1}{\alpha+2}\int_{{\Omega}}|u_0|^{\alpha+2}dx + C\int_{0}^{T}\int_{{\Omega}}\frac{|1-v|}{v} |u|^{\alpha}|u_x|dxdt \\
					&\quad +C\int_{0}^{T}\int_{{(\theta<3)(t)}}\frac{|\theta-1|}{v}|u|^{\alpha}|u_x|dxdt +  C\int_{0}^{T}\int_{(\theta>2)(t)}\frac{|\theta-1|}{v}|u|^\alpha |u_x|dxdt \\
					&\triangleq C + \sum_{i=1}^{3} J_{i},
					\label{2.42}
				\end{aligned}
			\end{equation}
			where in the second inequality we have used  (\ref{1.7}) and (\ref{1.8}).
			
			By (\ref{2.1}) and (\ref{2.3}), we deduce that for any $\delta \in [2,4]$,
			\begin{equation}
				\begin{aligned}
					&\sup_{0\leq t\leq T }\int_\Omega (v-1)^2dx+\sup_{0\leq t\leq T }\int_{(\theta<\delta)(t) } (\theta-1)^2dx\\
					&\leq C\sup_{0\leq t\leq T }\int_\Omega (v-\ln v-1)dx+C \sup_{0\leq t\leq T }\int_\Omega (\theta-\ln\theta-1)dx\leq C,
					\label{2.43}
				\end{aligned}
			\end{equation}
			which together with Holder's inequality yields that
			\begin{equation}
				\begin{aligned}
					|J_1|+|J_2|& \le C\int_{0}^{T}\||u|^\alpha u_x\|_{L^2(\Omega)}\left(\|v-1\|_{L^{2}(\Omega)}+ \|\theta-1\|_{L^{2}((\theta<3)(t))}\right)dt\\
					& \le C\int_{0}^{T}\||u|^\alpha u_x\|_{L^2(\Omega)}dt.
					\label{2.44}
				\end{aligned}
			\end{equation}
			
			Next, if $\beta\le 1/2$,
			\begin{equation}
				\begin{aligned}
					\||u|^\alpha u_x\|_{L^2(\Omega)} & \le C\max_{x\in\Omega}u^2(x,t)\||u|^{\alpha\beta}u_x\|_{L^2(\Omega)}  \\
					& \le C(\varepsilon)\int_{{\Omega}}u^2_xdx + \varepsilon\int_{{\Omega}}|u|^\alpha u^2_xdx,
					\label{2.45}
				\end{aligned}
			\end{equation}
			where in the second inequality we have used (\ref{2.1}) and (\ref{2.38}).
			
			If $\beta \in (1/2,1)$, we have
			\begin{equation}
				\begin{aligned}
					\||u|^\alpha u_x\|_{L^2(\Omega)} &\le C(\varepsilon)\max_{x\in\Omega}|u|^\alpha(x,t) + \varepsilon\int_{{\Omega}}|u|^\alpha u^2_xdx \\
					&\le C(\varepsilon)\int_{{\Omega}}u^2_xdx + 2\varepsilon\int_{{\Omega}}|u|^\alpha u^2_xdx,
					\label{2.47}
				\end{aligned}
			\end{equation}
			where we have used
			\begin{equation*}
				\begin{aligned}
					\max_{x\in\Omega}|u|^\alpha &= \max_{x\in\Omega}\left(-\int_{x}^{\infty}(|u|^\alpha)_x dx\right) \\
					&\le C\int_{{\Omega}}|u_x||u|^{\alpha-1}dx \\
					&\le C\left(\int_\Omega |u|^{\alpha-4}u^2_xdx\right)^{1/2} \left(\int_{{\Omega}}|u|^{\alpha+2}dx\right)^{1/2} \\
					&\le C\left(\int_{{\Omega}}|u|^{\alpha-4}u^2_xdx\right)^{1/2}\max_{x\in\Omega}|u|^{\alpha/2},
				\end{aligned}
			\end{equation*}
			due to $\alpha = {2/(1-\beta)}>4$. Thus, combining (\ref{2.44}), (\ref{2.45}) and (\ref{2.47}) implies for $\beta \in (0,1)$,
			\begin{equation}
				|J_1|+|J_2|\le C(\varepsilon)\int_{0}^{T}\int_{{\Omega}}u^2_xdxdt + C\varepsilon\int_{0}^{T}\int_{{\Omega}}|u|^\alpha u^2_xdxdt.
				\label{2.48}
			\end{equation}
			
			For $J_3$, it follows from Cauchy's inequality and (\ref{2.34}) that
			\begin{equation}
				\begin{aligned}
					|J_3|&\le C(\varepsilon)\int_{0}^{T}\int_{(\theta>2)(t)}(\theta-1)^2|u|^{\alpha}dxdt + \varepsilon\int_{0}^{T}\int_{{\Omega}}\frac{|u|^\alpha u^2_x}{v}dxdt \\
					&\le C(\varepsilon)\int_{0}^{T}\max_{x\in\Omega}(\theta - 7/4)_+^{\frac{2\alpha}{\alpha+2}}\int_{\Omega}\left((\theta-1)^2 + |u|^{\alpha+2}\right)dxdt \\
					&\quad +\varepsilon \int_{0}^{T}\int_{{\Omega}}\frac{|u|^\alpha u^2_x}{v}dxdt \\
					& \le C(\varepsilon)\int_{0}^{T}\max_{x\in\Omega}((\theta(x,t)-3/2)^{\beta+2}_+\theta^{-1})\int_\Omega\left((\theta-1)^2+|u|^{\alpha+2}\right)dxdt \\
					&\quad + \varepsilon\int_{0}^{T}\int_{{\Omega}}\frac{|u|^\alpha u^2_x}{v}dxdt,
					\label{2.49}
				\end{aligned}
			\end{equation}
			due to $\beta \in (0,1)$ and $2\alpha/(\alpha+2)<\beta+1$.
			
			Standard computation yields that
			\begin{equation*}
				\begin{aligned}
					&\max_{x\in\Omega}((\theta(x,t)-3/2)^{\beta+2}_+\theta^{-1}) \\
					&=\max_{x\in\Omega}\left(-\int_{x}^{\infty}((\theta - 3/2)^{\beta+2}_+\theta^{-1})_xdx\right) \\
					&\le C\int_{\Omega}|\theta_x|(\theta-3/2)^{\beta+1}_+\theta^{-1}dx + C\int_{\Omega}|\theta_x|(\theta-3/2)^{\beta+2}_+\theta^{-2}dx \\
					&\le C\left(\int_{{\Omega}}\theta^{\beta-2}\theta^2_xdx\right)^{1/2}\left(\int_{{\Omega}}(\theta-3/2)_+^{\beta+2}dx\right)^{1/2}
					\\
					&\le C \left(\int_{{\Omega}}\theta^{\beta-2}\theta^2_xdx\right)^{1/2}\max_{x\in\Omega}\left((\theta(x,t)-3/2)^{\beta+2}_+\theta^{-1}\right)^{1/2}
					\left(\int_{(\theta>3/2)(t)} \theta dx\right)^{1/2}\\
					&\le C \left(\int_{{\Omega}}\theta^{\beta-2}\theta^2_xdx\right)^{1/2}\max_{x\in\Omega}\left((\theta(x,t)-3/2)^{\beta+2}_+\theta^{-1}\right)^{1/2},
				\end{aligned}
			\end{equation*}
			where in the last inequality we have used (\ref{2.21}). This implies
			\begin{equation}
				\max_{x\in\Omega}((\theta(x,t)-3/2)^{\beta+2}_+\theta^{-1}) \le C\int_{{\Omega}}\theta^{\beta-2} \theta_x^2dx.
				\label{2.50}
			\end{equation}
			
			Then, substituting (\ref{2.48}), (\ref{2.49}) and (\ref{2.50}) into (\ref{2.42}) and choosing $\varepsilon$ suitable small lead to
			\begin{equation}
				\begin{aligned}
					&\sup_{0\le t <T}\int_{{\Omega}}|u|^{\alpha+2}dx + \int_{0}^{T}\int_{{\Omega}}|u|^\alpha u^2_xdxdt \\
					&\le C\int_{0}^{T}\int_{{\Omega}}\theta^{\beta-2}\theta^2_xdx\int_{{\Omega}}((\theta-1)^2 + |u|^{\alpha+2})dxdt + C\int_{0}^{T}\int_{{\Omega}}u^2_xdxdt +C \\
					&\le C\int_{0}^{T}\int_{{\Omega}}\theta^{\beta-2}\theta^2_xdx\int_{\Omega}((\theta-1)^2 + |u|^{\alpha+2})dxdt + C(\delta) \\
					&\quad + C\delta\int_{0}^{T}\int_{{\Omega}}\theta u^2_xdxdt,
					\label{2.51}
				\end{aligned}
			\end{equation}
			where in the last inequality we have used (\ref{2.41}).
			
			Adding (\ref{2.51}) multiplied by $C_2+1$ to (\ref{2.40}), we get by  choosing $\delta$ suitable small
			\begin{equation}
				\begin{aligned}
					&\sup_{0 \leq t<T}\int_{{\Omega}}\left[(\theta-1)^2 + |u|^{\alpha+2}\right]dx + \int_{0}^{T}\int_{{\Omega}}\left[(\theta + |u|^\alpha) u^2_x + \theta^\beta \theta^2_x \right] dxdt \\
					&\le C+C\int_{0}^{T}\left(\int_{{\Omega}}\frac{\theta^\beta\theta^2_x}{v\theta^2}dx\int_{{\Omega}}[(\theta-1)^2 + |u|^{\alpha+2}]dx\right)dt \\
					&\quad + C\int_{0}^{T}\sup_{x \in \Omega}(\theta-3/2)^2_+(x,t)dt,
					\label{2.52}
				\end{aligned}
			\end{equation}
			where we have used
			\begin{equation*}
				\begin{aligned}
					\int_{{\Omega}}(\theta-1)^2dx&\le C\int_{(\theta>3)(t)}(\theta-1)^2dx + \int_{(\theta<4)(t)}(\theta-1)^2dx \\
					&\le C\int_{{\Omega}}(\theta-2)^2_+dx + C,
				\end{aligned}
			\end{equation*}
			due to (\ref{2.43}).
			
			Step 3. It remains to estimate the last term on the right hand side of (\ref{2.52}). In  fact, we have  for $\delta \ge-1$,
			\begin{equation*}
				\begin{aligned}
					&\max_{x \in \Omega}(\theta(x,t)-3/2)^{\delta+3}_+ \\
					&= \max_{x \in \Omega}\left(-\int_{x}^{\infty}((\theta-3/2)^{\delta+3}_+)_xdx\right) \\
					&\le C\int_{{\Omega}}|\theta_x|(\theta-3/2)^{\delta+2}_+dx \\
					&\le C\left(\int_{(\theta>3/2)(t)}\theta^2_x\theta^\delta dx\right)^{1/2}\left(\int_{(\theta>3/2)(t)}(\theta(x,t) - 3/2)^{2\delta+4}_+\theta^{-\delta}dx\right)^{1/2} \\
					&\le C\left(\int_{(\theta>3/2)(t)}\theta^2_x\theta^\delta dx\right)^{1/2}\max_{x \in \Omega}(\theta(x,t)-3/2)^{(\delta+3)/2}_+,
				\end{aligned}
			\end{equation*}
			where in the last inequality we have used (\ref{2.21}). This shows
			\begin{equation}
				\max_{x \in \Omega}(\theta(x,t)-3/2)^{\delta+3}_+ \le C\int_{(\theta>3/2)(t)}\theta^2_x\theta^\delta dx.
				\label{2.57}
			\end{equation}
			Choosing $\delta = -1$ in (\ref{2.57}) and using $\beta<1$, we have
			\begin{equation}
				\begin{aligned}
					\int_0^T\max_{x \in \Omega}(\theta(x,t)-3/2)_+^2dt
					&\le C\int_0^T\int_\Omega \theta^{-1}\theta_x^2dxdt\\
					&\le C\int_0^T\int_\Omega \theta^{\beta-2}\theta_x^2dxdt+\varepsilon\int_0^T\int_\Omega \theta^{\beta}\theta_x^2dxdt.
				\end{aligned}
				\label{2.53}
			\end{equation}
			Putting this into (\ref{2.52}), choosing $\varepsilon$ suitable small, and using Gronwall's inequality lead to
			\begin{equation*}
				\sup_{0 \leq t<T}\int_{{\Omega}}[(\theta-1)^2 + |u|^{\alpha+2}]dx + \int_{0}^{T}\int_{{\Omega}}\left[(\theta + |u|^\alpha)u^2_x + \theta^\beta \theta^2_x\right]dxdt\le C,
			\end{equation*}
			which combined with (\ref{2.41}) and (\ref{2.1}) immediately gives (\ref{2.28}). The proof of Lemma \ref{lemma2.6} is completed.
		\end{proof}
	\end{lemma}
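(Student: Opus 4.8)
The plan is to upgrade the weighted dissipation controlled by the energy estimate \eqref{2.1}, namely $u_x^2/(v\theta)$ and $\theta^\beta\theta_x^2/(v\theta^2)$, to the unweighted $u_x^2$ and the $\theta^\beta$-weighted (and $\theta^{-1}$-weighted) $\theta_x^2$. Following the strategy of Li--Liang \cite{LL} and Li--Xu \cite{LX}, I would localize to the high-temperature region $\{\theta>2\}$, and the central new issue specific to this problem is tracking the boundary contribution at $x=0$ produced by the outer pressure condition \eqref{1.8}.

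First I would multiply the reformulated temperature equation \eqref{2.2} by the truncation $(\theta-2)_+$ and integrate over $\Omega\times(0,T)$, which yields the good term $\int_0^T\int_{(\theta>2)(t)}\theta^\beta\theta_x^2/v$ but also the harmful term $\int_0^T\int_\Omega(u_x^2/v)(\theta-2)_+$. To absorb the latter I would multiply the momentum equation \eqref{1.2} by $2u(\theta-2)_+$, integrate, and add the two identities so that the harmful term reappears with a favorable sign. On the half-line this integration by parts produces the extra boundary term $-2\int_0^T[u(\theta-2)_+](0,t)dt$ not present in \cite{LX}; using $(\theta-3/2)_+\ge1/2$ on $\{\theta\ge2\}$ together with the one-sided estimate $\max_\Omega u^2\le2\|u\|_{L^2(\Omega)}\|u_x\|_{L^2(\Omega)}\le C\|u_x\|_{L^2(\Omega)}$ (valid since $u\to0$ at infinity and $\|u\|_{L^2}\le C$ by \eqref{2.1}), this boundary term is dominated by $C\int_0^T\|u_x\|_{L^2}^2dt+C\int_0^T\max_\Omega(\theta-3/2)_+^2dt$.

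Two terms then demand genuine work. The first is the nonlinear term $\int_0^T\int_\Omega|u|^\alpha u_x^2$ with $\alpha=2/(1-\beta)$ generated by the mixed contributions. To control it I would recast \eqref{1.2} as $u_t=\big((u_x-\theta+v)/v\big)_x$, test with $|u|^\alpha u$, and integrate; here \eqref{1.8} is used in a favorable way, since the flux $(u_x-\theta+v)/v=(u_x-\theta)/v+1$ equals $-1+1=0$ at $x=0$, so no boundary term survives, while \eqref{1.7} removes the contribution at infinity. The remaining right-hand side splits into pieces involving $|v-1|$, $|\theta-1|$ on $\{\theta<3\}$, and $|\theta-1|$ on $\{\theta>2\}$, handled by H\"older's inequality and the $L^2$ bounds on $v-1$ and on $\theta-1$ over the low-temperature set coming from \eqref{2.1}, with separate Sobolev steps for $\beta\le1/2$ and $\beta\in(1/2,1)$. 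The second is $\int_0^T\max_\Omega(\theta-3/2)_+^2dt$: I would prove the pointwise bound $\max_\Omega(\theta-3/2)_+^{\delta+3}\le C\int_{(\theta>3/2)(t)}\theta^\delta\theta_x^2$ by integrating $\partial_x(\theta-3/2)_+^{\delta+3}$ from $x$ to $\infty$ and absorbing the resulting maximum, then take $\delta=-1$ to reduce to $C\int_0^T\int_\Omega\theta^{-1}\theta_x^2$, and finally use Young's inequality $\theta^{-1}\le C(\varepsilon)\theta^{\beta-2}+\varepsilon\theta^\beta$ (valid since $\beta\in(0,1)$): the $\theta^{\beta-2}\theta_x^2$ part is finite by Lemma \ref{lemma2.5} with $p=1$, and the $\theta^\beta\theta_x^2$ part is absorbed.

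Collecting these estimates yields a closed differential inequality of Gronwall type,
\[
\sup_{0\le t<T}\int_\Omega\big[(\theta-1)^2+|u|^{\alpha+2}\big]dx+\int_0^T\int_\Omega\big[(\theta+|u|^\alpha)u_x^2+\theta^\beta\theta_x^2\big]dxdt\le C+C\int_0^T V(t)\int_\Omega\big[(\theta-1)^2+|u|^{\alpha+2}\big]dxdt,
\]
whose multiplier $V(t)$ is integrable in time by \eqref{2.1}; Gronwall's inequality then closes the estimate and delivers \eqref{2.28} after combining with the elementary interpolation $\int_0^T\|u_x\|_{L^2}^2\le\delta\int_0^T\int_\Omega\theta u_x^2+C(\delta)$ and \eqref{2.1}. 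I expect the main obstacle to be exactly the interaction of the degeneracy $\beta>0$ with the new outer-pressure boundary term: every boundary contribution must be reduced to $\|u_x\|_{L^2}^2$ and $\max_\Omega(\theta-3/2)_+^2$, and the latter must ultimately be absorbed through the dissipation $\theta^{\beta-2}\theta_x^2$ supplied by Lemma \ref{lemma2.5}, since the $\theta^\beta\theta_x^2$ dissipation is only available with an arbitrarily small coefficient.
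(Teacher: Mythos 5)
Your proposal is correct and follows essentially the same route as the paper: the same truncation $(\theta-2)_+$ coupled with the momentum equation tested by $2u(\theta-2)_+$, the same treatment of the outer-pressure boundary term via $(\theta-3/2)_+\ge 1/2$ and $\max_\Omega u^2\le C\|u_x\|_{L^2(\Omega)}$, the same $|u|^\alpha u$ test with the vanishing boundary flux from \eqref{1.8}, the same pointwise estimate \eqref{2.57} with $\delta=-1$ and absorption through $\theta^{\beta-2}\theta_x^2$, and the same Gronwall closure. No gaps to report.
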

	
	\begin{lemma}\label{lemma2.7}
		For $\beta \in [1,\infty)$, there exists some positive constant C such that for any $T>0$,
		\begin{equation}
			\int_{0}^{T}\int_{{\Omega}}(u^2_x + \theta^{-1}\theta^2_x)dxdt\le C.
			\label{2.54}
		\end{equation}
		\begin{proof} The proof of Lemma \ref{lemma2.7} is simliar to that of   \cite[Lemma 2.4]{LX}. For reader's convenience, we state the proof as follows.
			
			Choosing $p=\beta$ in (\ref{2.19}), we have
			\begin{equation*}
				\int_{0}^{T}\int_{{\Omega}}\theta^{-1}\theta^2_xdxdt\le C.
			\end{equation*}
			Thus, we only need  to prove
			\begin{equation}
				\int_{0}^{T}\int_{{\Omega}}u^2_xdxdt\le C.
				\label{2.55}
			\end{equation}
			Indeed, integrating (\ref{2.2}) multiplied by $(\theta-2)_+\theta^{-1}$  over $\Omega \times (0,T)$ leads to
			\begin{equation}
				\begin{aligned}
					&\int_{0}^{T}\int_{{\Omega}}\frac{u^2_x}{v}(\theta-2)_+\theta^{-1}dxdt \\
					& =-\int_{0}^{T}\int_{{(\theta>2)}(t)}\frac{\theta^\beta\theta_x}{v}(1-2\theta^{-1})_xdxdt+ \int_{0}^{T}\int_{{\Omega}}\frac{(\theta-2)_+}{v} u_xdxdt\\
					&\quad+ \int_{{\Omega}}\int_{\theta_0}^{\theta}(s-2)_+s^{-1}dsdx\\ &=2\int_{0}^{T}\int_{(\theta>2)(t)}\frac{\theta^{\beta}\theta^2_x}{v\theta^2}dxdt  + \int_{0}^{T}\int_{{\Omega}}\frac{(\theta-2)_+}{v} u_xdxdt \\
					&\quad + \int_{{\Omega}}\int_{2}^{\theta}(s-2)_+s^{-1}dsdx -\int_{{\Omega}}\int_{2}^{\theta_0}(s-2)_+ s^{-1}dsdx  \\
					& \le C + C(\varepsilon)\int_{0}^{T}\max_{x \in \Omega}(\theta-3/2)^{\beta+1}_+(x,t)dt + \varepsilon\int_{0}^{T}\int_{{\Omega}}u^2_xdxdt.
					\label{2.56}
				\end{aligned}
			\end{equation}
			Since $\beta\ge 1$, choosing $\delta = \beta-2$ in (\ref{2.57}) gives
			\begin{equation}
				\int_{0}^{T}\max_{x \in \Omega}(\theta(x,t)-3/2)^{\beta+1}_+dt \le C.
				\label{2.58}
			\end{equation}					
			Finally, we have by (\ref{2.1}), (\ref{2.56}) and (\ref{2.58}),
			\begin{equation*}
				\begin{aligned}
					\int_{0}^{T}\int_{{\Omega}}u^2_xdxdt &\le C\int_{0}^{T}\int_{(\theta>3)(t)}\frac{u^2_x}{v}(\theta - 2)_+\theta^{-1}dxdt  + C\int_{0}^{T}\int_{(\theta<4)(t)}\frac{u^2_x}{v\theta}dxdt \\
					&\le C(\varepsilon) + C\varepsilon\int_{0}^{T}\int_{{\Omega}}u^2_xdxdt,
				\end{aligned}
			\end{equation*}
			which directly gives (\ref{2.55}) and finish the proof of Lemma \ref{lemma2.7}.
		\end{proof}
	\end{lemma}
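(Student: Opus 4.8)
The plan is to bound the two terms in \eqref{2.54} by quite different means. The diffusive term is essentially free: taking $p=\beta$ in the integral estimate \eqref{2.19} of Lemma \ref{lemma2.5} gives $\theta^\beta/\theta^{p+1}=\theta^{-1}$, so $\int_0^T\int_\Omega\theta^{-1}\theta_x^2\,dx\,dt\le C$ immediately. All the work therefore goes into proving $\int_0^T\int_\Omega u_x^2\,dx\,dt\le C$.

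To control $u_x$, I would test the temperature equation \eqref{2.2} against the multiplier $(\theta-2)_+\theta^{-1}$ and integrate over $\Omega\times(0,T)$. The dissipation on the right of \eqref{2.2} produces the favorable quantity $\int_0^T\int_\Omega v^{-1}u_x^2(\theta-2)_+\theta^{-1}\,dx\,dt$, which is coercive in $u_x$ wherever $\theta$ is bounded away from $2$. The time-derivative term integrates to the entropy-type expression $\int_\Omega\int_{\theta_0}^{\theta}(s-2)_+s^{-1}\,ds\,dx$, bounded by $\int_\Omega(\theta-2)_+\,dx\le C$ through \eqref{2.21}. Integrating the heat-flux term by parts, the boundary contribution at $x=0$ vanishes thanks to \eqref{1.9}, i.e. $\theta_x(0,t)=0$, and the far-field condition \eqref{1.7} disposes of $x\to\infty$; what remains is $2\int_0^T\int_{(\theta>2)(t)}v^{-1}\theta^{\beta-2}\theta_x^2\,dx\,dt$, which is bounded by $C$ via \eqref{2.19} with $p=1$ together with \eqref{2.3}. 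It is worth stressing that---in sharp contrast with the estimate for $\beta\in(0,1)$ in Lemma \ref{lemma2.6}---the outer pressure condition \eqref{1.8} plays no role here, since the temperature equation only feels \eqref{1.9}; this is exactly what makes the present range $\beta\ge1$ the routine one. The convection term $\int_0^T\int_\Omega v^{-1}(\theta-2)_+u_x\,dx\,dt$ is split by Cauchy's inequality into an absorbable $\varepsilon\int_0^T\int_\Omega u_x^2$ plus $C(\varepsilon)\int_0^T\int_\Omega(\theta-2)_+^2\,dx\,dt$; using $\int_\Omega(\theta-2)_+\,dx\le C$ and the elementary pointwise bound $(\theta-2)_+\le C(\theta-3/2)_+^{\beta+1}$ (valid because $(\theta-3/2)_+\ge1/2$ on $\{\theta>2\}$), this last integral is dominated by $C\int_0^T\max_{x\in\Omega}(\theta-3/2)_+^{\beta+1}(x,t)\,dt$.

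The decisive step is then to absorb this maximum. I would invoke the pointwise inequality \eqref{2.57} with the choice $\delta=\beta-2$, which is admissible precisely because $\beta\ge1$ forces $\delta\ge-1$; it yields $\max_{x\in\Omega}(\theta-3/2)_+^{\beta+1}\le C\int_{(\theta>3/2)(t)}\theta^{\beta-2}\theta_x^2\,dx$, whence, after integrating in time and applying \eqref{2.19} with $p=1$, one obtains $\int_0^T\max_{x\in\Omega}(\theta-3/2)_+^{\beta+1}\,dt\le C$. Combining all of the above gives $\int_0^T\int_\Omega v^{-1}u_x^2(\theta-2)_+\theta^{-1}\,dx\,dt\le C+\varepsilon\int_0^T\int_\Omega u_x^2\,dx\,dt$. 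To convert this weighted bound into the full one, I would split $\int_0^T\int_\Omega u_x^2$ over the regions $\{\theta>3\}$ and $\{\theta<4\}$: on the former, $(\theta-2)_+\theta^{-1}\ge1/3$ so, using \eqref{2.3}, $u_x^2$ is dominated by the favorable term; on the latter, $\theta$ is bounded above and the contribution is controlled directly by the basic energy dissipation $\int_0^T\int_\Omega v^{-1}\theta^{-1}u_x^2\le C$ from \eqref{2.1}. Choosing $\varepsilon$ small enough to absorb the leftover $\varepsilon\int_0^T\int_\Omega u_x^2$ into the left-hand side then closes the estimate and yields \eqref{2.54}.

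I expect the only genuinely delicate point to be the hypothesis-driven step above: one must tune the multiplier so that the stray maximum carries exactly the exponent $\beta+1$ that \eqref{2.57} can furnish with $\delta=\beta-2$, and this is where $\beta\ge1$ is used in an essential way. Everything else is bookkeeping---integration by parts with vanishing boundary terms, Cauchy's inequality, and the region splitting---so the argument runs parallel to \cite[Lemma 2.4]{LX}.
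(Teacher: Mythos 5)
Your proposal is correct and follows essentially the same route as the paper: the same multiplier $(\theta-2)_+\theta^{-1}$ applied to \eqref{2.2}, the bound \eqref{2.19} with $p=\beta$ and $p=1$, the key inequality \eqref{2.57} with $\delta=\beta-2$ (where $\beta\ge1$ guarantees $\delta\ge-1$), and the final splitting over $(\theta>3)(t)$ and $(\theta<4)(t)$ with absorption of the $\varepsilon$-term. Your additional observations (explicit treatment of the vanishing boundary terms via \eqref{1.9} and \eqref{1.7}, and the remark that the outer pressure condition \eqref{1.8} plays no role here) are accurate refinements of the paper's argument, not deviations from it.
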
			
	For further discussion,  we  have the following time-independent estimate  on the $L^\infty(0,T; L^2(\Omega))$-norm  of $v_x$
	which is crucial in our analysis.

	\begin{lemma}
		\label{lemma2.8}
		There exists some positive constant $C$ such that for any $T>0$.
		\begin{equation}
			\sup_{0 \leq t \leq T}\int_{{\Omega}}v^2_xdx\le C,
			\label{2.59}
		\end{equation}
	\end{lemma}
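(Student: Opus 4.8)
The plan is to estimate $\int_\Omega v_x^2\,dx$ through the logarithmic derivative $w\triangleq v_x/v=(\ln v)_x$; by the two-sided bound \eqref{2.3} on $v$, controlling $\int_\Omega w^2\,dx$ is equivalent to controlling $\int_\Omega v_x^2\,dx$, since $\int_\Omega v_x^2\,dx\le C\int_\Omega w^2\,dx$. Differentiating $w$ in time, using $v_t=u_x$ and the momentum equation \eqref{1.2} (recall the normalization $\tilde{\mu}=1$), I would first record the pointwise identity
\begin{equation*}
w_t+\frac{\theta}{v}\,w=u_t+\frac{\theta_x}{v},
\end{equation*}
equivalently $w_t=\left(u_x/v\right)_x=u_t+(\theta/v)_x$. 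The coefficient $\theta/v$ is simultaneously the source of the coercivity I want to exploit and the principal difficulty, because at this stage no positive lower bound for $\theta$ is yet available.

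Next I would introduce the functional
\begin{equation*}
f(t)=\frac12\int_\Omega w^2\,dx-\int_\Omega u\,w\,dx,
\end{equation*}
which, by \eqref{2.1} and \eqref{2.3}, satisfies $\tfrac14\int_\Omega w^2\,dx-C\le f\le \tfrac34\int_\Omega w^2\,dx+C$ and is therefore comparable to $\int_\Omega v_x^2\,dx$ up to an additive constant. Multiplying the identity for $w$ by $w$, integrating over $\Omega$, and handling $\int_\Omega u_t\,w\,dx$ by transferring the time derivative onto $f$ and integrating $\int_\Omega u\,\left(u_x/v\right)_x\,dx$ by parts, I obtain
\begin{equation*}
f_t+\int_\Omega\frac{\theta}{v}w^2\,dx=\frac{uu_x}{v}(0,t)+\int_\Omega\frac{u_x^2}{v}\,dx+\int_\Omega\frac{\theta_x}{v}\,w\,dx,
\end{equation*}
the far-field boundary contribution vanishing by \eqref{1.7}. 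The boundary flux at $x=0$ is the new feature forced by the outer pressure condition \eqref{1.8}.

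The crux is to recover coercivity from the degenerate dissipation. Splitting $\Omega$ into $\{\theta\ge 1/2\}$ and $\{\theta<1/2\}$, using $v\le C$, and observing that $1\le 16(1-\theta)_+^4$ on $\{\theta<1/2\}$, I would show
\begin{equation*}
\int_\Omega\frac{\theta}{v}w^2\,dx\ge \frac18 f-C\max_{x\in\Omega}(1-\theta)_+^4\,f-C.
\end{equation*}
For the boundary term, \eqref{1.8} gives $(u_x/v)(0,t)=-1+(\theta/v)(0,t)$, hence $\frac{uu_x}{v}(0,t)=u(0,t)\big((\theta/v)(0,t)-1\big)$; combining the trace bound $|u(0,t)|\le C\|u_x\|_{L^2(\Omega)}^{1/2}$ from \eqref{2.38}, the lower bound on $v$, and $\theta\le(\theta-3/2)_+ +3/2$ yields
\begin{equation*}
\Big|\frac{uu_x}{v}(0,t)\Big|\le C+C\|u_x\|_{L^2(\Omega)}^2+C\max_{x\in\Omega}(\theta-3/2)_+^2.
\end{equation*}
The remaining two terms are routine: $\int_\Omega u_x^2/v\,dx\le C\|u_x\|_{L^2(\Omega)}^2$, while a weighted Cauchy inequality bounds $\int_\Omega(\theta_x/v)\,w\,dx$ by $\varepsilon\int_\Omega(\theta/v)w^2\,dx+C\int_\Omega\theta_x^2/\theta\,dx$, the first piece being absorbed into the dissipation. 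Collecting these produces the differential inequality
\begin{equation*}
f_t+\frac18 f\le C\int_\Omega\frac{\theta_x^2}{\theta}\,dx+C\max_{x\in\Omega}(\theta-3/2)_+^2+C\|u_x\|_{L^2(\Omega)}^2+C\max_{x\in\Omega}(1-\theta)_+^4\,f+C.
\end{equation*}

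To close, I would apply Gronwall's inequality. Each forcing term is time-integrable on $(0,\infty)$: $\int_0^\infty\int_\Omega\theta^{-1}\theta_x^2\,dx\,dt\le C$ by Lemma \ref{lemma2.5} (or \eqref{2.28}, \eqref{2.54}), $\int_0^\infty\|u_x\|_{L^2(\Omega)}^2\,dt\le C$ by Lemmas \ref{lemma2.6}--\ref{lemma2.7}, and $\int_0^\infty\max_{x\in\Omega}(\theta-3/2)_+^2\,dt\le C$ by \eqref{2.53}. The only term carrying $f$ has coefficient $\max_{x\in\Omega}(1-\theta)_+^4$, and a trace estimate analogous to \eqref{2.57} together with $\int_\Omega(1-\theta)_+^2\,dx\le C$ from \eqref{2.43} gives $\max_{x\in\Omega}(1-\theta)_+^4\le C\int_\Omega\theta^{-1}\theta_x^2\,dx$, which is once more time-integrable. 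Thus the Gronwall coefficient has a finite time integral, and Gronwall's inequality yields $f(t)\le C$ for all $t$, hence $\int_\Omega w^2\,dx\le C$ and finally \eqref{2.59}. The main obstacle throughout is precisely the degeneracy of $\theta/v$ where $\theta$ is small; it is overcome by relegating that region to the Gronwall term $\max_{x\in\Omega}(1-\theta)_+^4\,f$ and verifying, via the entropy-type bound \eqref{2.43} and Lemma \ref{lemma2.5}, that its coefficient is integrable in time, while the outer-pressure boundary flux is tamed by the boundary condition \eqref{1.8} and the trace inequality \eqref{2.38}.
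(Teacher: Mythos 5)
Your proposal is correct and follows essentially the same route as the paper's proof: the same functional $f$ built from $\int_\Omega (v_x/v)^2\,dx - 2\int_\Omega u\,(v_x/v)\,dx$, the same treatment of the outer-pressure boundary flux via \eqref{1.8} and the trace bound \eqref{2.38} (matching \eqref{2.61}), the same coercivity-with-error decomposition producing the $\max_{x\in\Omega}(1-\theta)_+^4\,f$ term, and the same differential inequality \eqref{aa1} closed by Gronwall using the time-integrability of $\int_\Omega\theta^{-1}\theta_x^2\,dx$, $\|u_x\|_{L^2(\Omega)}^2$, $\max_{x\in\Omega}(\theta-3/2)_+^2$ and $\max_{x\in\Omega}(1-\theta)_+^4$ (the paper's \eqref{2.63}, \eqref{2.64}). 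The only deviations are cosmetic: you obtain coercivity by splitting $\Omega$ into $\{\theta\ge 1/2\}$ and $\{\theta<1/2\}$ rather than by the paper's algebraic Cauchy splitting, and you cite \eqref{2.53} where the paper invokes \eqref{2.57} with $\delta=-1$; both are equivalent given Lemmas \ref{lemma2.5}--\ref{lemma2.7}.
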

	\begin{proof} By \eqref{1.1}, we rewrite \eqref{1.2} as
		$$
		u_t+\left(\frac{\theta}{v}\right)_x=\left(\frac{v_x}{v}\right)_t.
		$$
		Integrating this multiplied by $\frac{v_x}{v}$ over $\Omega$, we have by (\ref{1.1}) and Cauchy inequality
		\begin{equation}
			\begin{aligned}
				&\frac{1}{2}\frac{d}{dt}\int_{{\Omega}}\frac{v^2_x}{v^2}dx +\int_{{\Omega}}\frac{\theta v^2_x}{v^3}dx \\
				&= \int_{{\Omega}}\frac{\theta_x v_x}{v^2}dx -\frac{uu_x}{v}(0,t)+\frac{d}{dt}\int_{{\Omega}}u\frac{v_x}{v}dx+ \int_{\Omega}\frac{u_x^2}{v}dx \\
				&\le C\int_{\Omega}\frac{\theta^2_x}{\theta}dx + \frac{1}{4}\int_{{\Omega}}\frac{\theta v^2_x}{v^3}dx +\left|\frac{uu_x}{v}(0,t)\right|+\frac{d}{dt}\int_{{\Omega}}u\frac{v_x}{v}dx+ \int_{\Omega}\frac{u_x^2}{v}dx\\
				&\le C\int_{\Omega}\frac{\theta^2_x}{\theta}dx + \frac{1}{4}\int_{{\Omega}}\frac{(1-\theta) v^2_x}{v^3}dx + \frac{1}{2}\int_{{\Omega}}\frac{\theta v^2_x}{v^3}dx-\frac{1}{4} \int_\Omega \frac{v_x^2}{v^3}dx\\
				&\quad+\left|\frac{uu_x}{v}(0,t)\right|+\frac{d}{dt}\int_{{\Omega}}u\frac{v_x}{v}dx+ \int_{\Omega}\frac{u_x^2}{v}dx  \\
				&\le C\int_{\Omega}\frac{\theta^2_x}{\theta}dx -\frac{1}{8}\int_{{\Omega}}\frac{v^2_x}{v^3}dx + C\max_{x\in\Omega}(1-\theta)^4_+\int_{{\Omega}}\frac{v^2_x}{v^2}dx+\frac{1}{2}\int_{{\Omega}}\frac{\theta v^2_x}{v^3}dx \\
				&\quad+  \left|\frac{uu_x}{v}(0,t)\right| +  \frac{d}{dt}\int_{{\Omega}}u\frac{v_x}{v}dx + \int_{\Omega}\frac{u^2_x}{v}dx.
				\label{2.60}
			\end{aligned}
		\end{equation}
		
		Then, it follows from (\ref{1.7}) that for any $\varepsilon>0$,
		\begin{equation}
			\begin{aligned}
				\left|\frac{uu_x}{v}(0,t)\right| &= \left|\frac{u(\theta-v)}{v}(0,t)\right| \\
				&\le (C|u||\theta|+|u|)(0,t) \\
				&\le C(|u||\theta-3/2|_+ + |u|) (0,t) \\
				&\le C(\|u_x\|^{\frac{1}{2}}_{L^2(\Omega)} \max_{x\in\Omega}|\theta(x,t)-3/2|_+ + C\|u_x\|^{\frac{1}{2}}_{L^2(\Omega)}) \\
				&\le C +C\|u_x\|^2_{L^2(\Omega)} + C\max_{x\in\Omega}|\theta(x,t)-3/2|^2_+,
				\label{2.61}
			\end{aligned}	
		\end{equation}
		where in the third inequality we have used (\ref{2.38}). Putting (\ref{2.61}) into (\ref{2.60}) leads to
		\begin{equation}
			\begin{aligned}
				&\frac{1}{2}\frac{d}{dt}\int_{{\Omega}}\frac{v^2_x}{v^2}dx + \frac{1}{8}\int_{{\Omega}}\frac{v^2_x}{v^2}dx + \frac12\int_{{\Omega}}\frac{\theta v^2_x}{v^2}dx \\
				&\quad \le C\int_{{\Omega}}\frac{\theta^2_x}{\theta}dx + C\max_{x\in\Omega}(1-\theta)^4_+\int_{{\Omega}}\frac{v^2_x}{v^2}dx +  \frac{d}{dt}\int_{{\Omega}}u\frac{v_x}{v}dx \\
				&\quad \quad + C\max_{x\in\Omega}(\theta(x,t)-3/2)^2_+ + C\|u_x\|^2_{L^2(\Omega)}  + C.
				\label{2.62}
			\end{aligned}
		\end{equation}
		Combining (\ref{2.57}) with $\delta = -1$, (\ref{2.28}) and (\ref{2.54}), we obtain
		\begin{equation}
			\int_{0}^{T}\max_{x\in\Omega}(\theta - 3/2)^2_+dt\le C.
			\label{2.63}
		\end{equation}
		
		Moreover, we infer  from (\ref{2.54}), (\ref{2.28}) and (\ref{2.43}) that
		\begin{equation}
			\begin{aligned}
				\int_{0}^{T}\max_{x\in\Omega}(1-\theta)^4_+dt & = 4\int_{0}^{T}\max_{x\in\Omega}\left(\int_{x}^{\infty}(1-\theta)_+\theta_xdx\right)^2dt \\
				&\le C\int_{0}^{T}\left(\int_{{\Omega}}(1-\theta)_+\theta^{-1/2}|\theta_x|dx\right)^2dt \\
				&\le C\int_{0}^{T}\int_{{\Omega}}(1-\theta)^2_+dx\int_{{\Omega}}\theta^{-1}\theta^2_xdxdt\\
				&\le C.
				\label{2.64}
			\end{aligned}
		\end{equation}

		Finally, by (\ref{2.1}), there exists some positive constant $\widetilde{C}$ such that
		\begin{equation}
			f(t)\triangleq \int_{{\Omega}}\frac{v^2_x}{v^2}dx - 2\int_{{\Omega}}u\frac{v_x}{v}dx + \widetilde{C} \ge \frac{1}{2}\int_{{\Omega}}\frac{v^2_x}{v^2}dx,
			\label{2.66}
		\end{equation}
		which together with $(\ref{2.62})-(\ref{2.66}), (\ref{2.3})$ gives
		\begin{equation}
			\begin{aligned}
				f_t + \frac{1}{8}f &\le C\int_{{\Omega}}\frac{\theta^2_x}{\theta}dx +C\max_{x\in\Omega}(\theta-3/2)^2_+ +C\|u_x\|^2_{L^2(\Omega)}\\
				& \quad + C\max_{x\in\Omega}(1-\theta)^4_+f + C.
				\label{aa1}
			\end{aligned}
		\end{equation}
		Combining this with Gronwall's inequality,  (\ref{2.28}), (\ref{2.54}), (\ref{2.58}), (\ref{2.64}) gives (\ref{2.59}),   and hence the proof of Lemma \ref{lemma2.8} is completed.
	\end{proof}
	Now, we can derive the following  necessary uniform-in-time estimates.
	\begin{lemma}
		\label{lemma q}
		There exist a positive constant C such that for any $T>0$
		\begin{equation}
			\sup_{0 \leq t \leq T}\int_{{\Omega}}u_x^2dx + \int_{0}^{T}\int_{{\Omega}}(u_{xx}^2+\theta_x^2+v_x^2+u_t^2)dxdt\le C.
			\label{q1}
		\end{equation}
	\end{lemma}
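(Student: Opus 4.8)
The plan is to upgrade the first‑order information already secured (energy \eqref{2.1}, the bounds \eqref{2.28}/\eqref{2.54}, and $\sup_t\int v_x^2\le C$ from Lemma \ref{lemma2.8}) to a genuine second‑order estimate by testing the momentum equation against a carefully chosen second‑order quantity. Following the recasting \eqref{a1}, I write
\begin{equation*}
u_t=\left(\frac{u_x-\theta+v}{v}\right)_x=:F_x,
\end{equation*}
and observe the decisive structural fact: the outer‑pressure condition \eqref{1.8} forces $u_x(0,t)=\theta(0,t)-v(0,t)$, so the numerator $u_x-\theta+v$, hence the flux $F$, \emph{vanishes at} $x=0$; by \eqref{1.7} it also vanishes as $x\to\infty$. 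This is exactly what makes $-(u_x-\theta+v)_x$ the right test function here: multiplying \eqref{a1} by it and integrating over $\Omega$, every boundary contribution at $x=0$ is annihilated (the last one using $\theta_x(0,t)=0$ from \eqref{1.9}). After integrating by parts in $x$ and $t$ I expect a differential identity of the form
\begin{equation*}
\frac{d}{dt}\mathcal{E}(t)+\int_\Omega\frac{(u_{xx}-\theta_x+v_x)^2}{v}\,dx=\int_\Omega u_x^2\,dx-\int_\Omega u_x\theta_t\,dx+\int_\Omega\frac{(u_x-\theta+v)\,v_x\,(u_{xx}-\theta_x+v_x)}{v^2}\,dx,
\end{equation*}
where $\mathcal{E}(t)$ is $\tfrac12\int_\Omega u_x^2\,dx$ corrected by the lower‑order pieces $\int_\Omega u_x(v-\theta)\,dx$, which are controlled by \eqref{2.1}, \eqref{2.3} and a small multiple of $\int_\Omega u_x^2\,dx$.

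The core of the argument is bounding the right‑hand side so as to produce a Gronwall inequality with coefficients that are integrable on $(0,\infty)$ rather than merely on $(0,T)$. The cross term is split by $(u_x-\theta+v)^2\le C(u_x^2+(\theta-1)^2+(v-1)^2)$ and Cauchy's inequality; the top‑order factor is absorbed into the dissipation, while the remaining $\int(u_x-\theta+v)^2v_x^2$ is handled through $\sup_t\int v_x^2\le C$ together with the time‑integrable maximum bounds \eqref{2.63}, \eqref{2.64} and the available $L^2$‑in‑space control of $(v-1),(\theta-1)$ (from \eqref{2.1}, \eqref{2.43} and Lemmas \ref{lemma2.6}--\ref{lemma2.7}). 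The temperature coupling $-\int u_x\theta_t$ is rewritten with \eqref{2.2}; one integration by parts (whose boundary term again dies by $\theta_x(0,t)=0$) yields $\int u_{xx}\,\theta^\beta\theta_x/v$, $-\int u_x^3/v$ and $\int\theta u_x^2/v$, which I plan to close using \eqref{2.28}/\eqref{2.54} and the scale‑invariant maximum estimate \eqref{2.57} with the appropriate exponents. Collecting everything gives $\frac{d}{dt}\mathcal{E}+c\int_\Omega(u_{xx}-\theta_x+v_x)^2\,dx\le m(t)\mathcal{E}+n(t)$ with $m,n\in L^1(0,\infty)$ uniformly in $T$, and Gronwall then delivers $\sup_{0\le t\le T}\int_\Omega u_x^2\,dx\le C$ and $\int_0^T\int_\Omega(u_{xx}-\theta_x+v_x)^2\,dx\,dt\le C$.

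From here the individual space‑time bounds follow essentially mechanically. First $\int_0^T\int_\Omega\theta_x^2\le C$ comes for free from \eqref{2.19} (with $p=1$) combined with \eqref{2.28}/\eqref{2.54}, splitting $\Omega$ into $\{\theta<1\}$ and $\{\theta\ge1\}$; hence $\int_0^T\int_\Omega(u_{xx}+v_x)^2\le C$. Since $v_t=u_x$ gives $v_{xt}=u_{xx}$, one has $2\int_0^T\int_\Omega u_{xx}v_x\,dx\,dt=\big[\int_\Omega v_x^2\,dx\big]_0^T$, bounded by \eqref{2.59}; expanding $(u_{xx}+v_x)^2$ then yields $\int_0^T\int_\Omega u_{xx}^2$ and $\int_0^T\int_\Omega v_x^2$ \emph{separately}, both $\le C$. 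Finally $\int_0^T\int_\Omega u_t^2\le C$ is read off directly from $u_t=F_x$, since $\int_\Omega u_t^2\le C\int_\Omega(u_{xx}-\theta_x+v_x)^2+C\int_\Omega(u_x-\theta+v)^2v_x^2$ and both integrals have just been controlled (the second exactly as in the cross‑term estimate). This assembles \eqref{q1}.

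The hard part will be the nonlinear terms in the second paragraph, and the reason is structural: at this stage there is \emph{no} uniform upper or lower bound on $\theta$ available — these are obtained only afterwards in Lemma \ref{lemma2.10}. Consequently terms such as $\int\theta u_x^2/v$, $\int u_x^3/v$, $\int u_{xx}\theta^\beta\theta_x/v$ and the cross term cannot be dominated by any pointwise control of $\theta$. They must instead be absorbed using the delicate time‑integrable maximum estimates \eqref{2.57}, \eqref{2.63}, \eqref{2.64} — which convert the $\theta_x$‑dissipation into $L^1(0,\infty)$‑in‑time control of $\max_x$ of powers of $(\theta-3/2)_+$ and $(1-\theta)_+$ — together with the uniform flux bound $\sup_t\int v_x^2\le C$, precisely so that the Gronwall coefficients $m,n$ remain integrable on the whole half‑line and the estimate is uniform in $T$.
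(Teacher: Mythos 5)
Your overall strategy is the same as the paper's central step: rewrite the momentum equation as \eqref{a1}/\eqref{q6}, test with $-(u_x-\theta+v)_x$ (using that $u_x-\theta+v$ vanishes at $x=0$ by \eqref{1.8} and at infinity by \eqref{1.7}), and close with Gronwall before reading off $u_t$ from the equation. However, there is a genuine gap in how you handle the temperature terms, and it occurs exactly at the place you flag as "the hard part." First, your claim that $\int_0^T\int_\Omega\theta_x^2\,dx\,dt\le C$ "comes for free" from \eqref{2.19} (with $p=1$) and \eqref{2.28}/\eqref{2.54} is false for $\beta\in[1,2)$: there \eqref{2.54} only controls $\theta^{-1}\theta_x^2$ and \eqref{2.19} only controls $\theta^{\beta-p-1}\theta_x^2$ with $p\ge1$, i.e.\ at best $\theta^{\beta-2}\theta_x^2$ with a negative exponent; since no upper bound on $\theta$ is available at this stage (that comes only in Lemma \ref{lemma2.10}), none of these dominates $\theta_x^2$ on the set where $\theta$ is large. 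Second, and more seriously, the term $\int_\Omega u_{xx}\theta^\beta\theta_x/v\,dx$ produced by substituting \eqref{2.2} into $-\int u_x\theta_t$ forces, after Cauchy's inequality, a bound on $\int_0^T\int_\Omega\theta^{2\beta}\theta_x^2\,dx\,dt$. This quantity is not reachable by the tools you cite: any attempt to split it using the maximum estimate \eqref{2.57} (e.g.\ $\int_{(\theta\ge2)}\theta^{2\beta}\theta_x^2\le C\max_x(\theta-3/2)_+^{2}\int\theta^{2\beta-2}\theta_x^2\,dx$) ends with a product of two factors that are each only $L^1$ in time, which is not integrable and is not of Gronwall type either.

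The paper's proof contains precisely the ingredient your proposal is missing: before touching the momentum equation, it tests the temperature equation \eqref{2.2} with $(\theta-2)_+^{\beta+1}$ and applies Gronwall to obtain \eqref{2.74}--\eqref{q4}, namely
\begin{equation*}
\int_0^T\!\!\int_\Omega\theta^{2\beta}\theta_x^2\,dx\,dt
+\int_0^T\!\!\int_\Omega\theta_x^2\,dx\,dt
\le C+C\int_0^T\Bigl(\int_\Omega u_x^2\,dx\Bigr)^{2}dt .
\end{equation*}
The right-hand side is not a constant, but it is Gronwall-compatible: when these bounds are fed into the differential inequality coming from the $-(u_x-\theta+v)_x$ test, the resulting inequality \eqref{zz} has the form $y'(t)\lesssim y(t)\int_\Omega u_x^2\,dx+\dots$ with coefficient $\int_\Omega u_x^2\,dx\in L^1(0,\infty)$ uniformly by \eqref{2.28}/\eqref{2.54}, so Gronwall closes everything at once. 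If you insert this preliminary renormalized estimate as Step 0 of your argument (and carry the harmless $\int_0^T(\int u_x^2)^2$ terms through your Gronwall inequality instead of insisting on constant bounds for the $\theta_x$ integrals), the rest of your plan — including the separation of $\int u_{xx}^2$ and $\int v_x^2$ via $v_{xt}=u_{xx}$ and \eqref{2.59}, and the final recovery of $\int_0^T\int u_t^2$ — goes through essentially as in the paper.
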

	\begin{proof}
		First, multiplying (\ref{2.2}) by $(\theta-2)^{\beta+1}_+$ and integrating by parts yields
		\begin{equation}
			\begin{aligned}
				&\frac{1}{\beta+2}\frac{d}{dt}\left(\int_{{\Omega}}(\theta-2)^{\beta+2}_+dx\right) + (\beta +1)\int_{{\Omega}}\frac{\theta^\beta (\theta -2)^\beta_+ \theta_x^2}{v}dx \\
				&= \int_{{\Omega}}\frac{(\theta-2)^{\beta+1}_+u_x^2}{v}dx - \int_{{\Omega}}\frac{\theta(\theta -2)^{\beta +1}_+u_x}{v}dx \\
				&\triangleq K_1 + K_2.
				\label{2.71}
			\end{aligned}
		\end{equation}
		A straightforward calculation indicates that
		\begin{equation}
			\begin{aligned}
				K_1 &\le C\max_{x \in \Omega}(\theta-2)^{\beta+1}_+\int_{{\Omega}}u_x^2dx \\
				& \le C\int_{{\Omega}}(\theta -2)^\beta_+ |\theta_x|dx\int_{{\Omega}}u_x^2dx\\
				& \le C\left(\int_{{\Omega}}\theta^\frac{\beta}{2}(\theta-2)^\frac{\beta}{2}_+|\theta_x|dx\right)\int_{{\Omega}}u^2_xdx \\
				&\le \varepsilon\int_{{\Omega}}\frac{\theta^\beta (\theta-2)^\beta_+\theta^2_x}{v}dx + C(\varepsilon)\left(\int_{{\Omega}}u_x^2dx\right)^2,
				\label{2.72}
			\end{aligned}
		\end{equation}
		and that
		\begin{equation}
			\begin{aligned}
				K_2 &\le C\int_{{\Omega}}\theta(\theta-2)^{\beta+1}_+|u_x|dx\\
				&\le C\max_{x \in \Omega}(\theta-2)_+^{\frac{\beta}{2}+1}\int_{{\Omega}}\theta(\theta-2)^\frac{\beta}{2}_+|u_x|dx \\
				& \le C\left(\int_{{\Omega}}(\theta-2)^\frac{\beta}{2}_+|\theta_x|dx\right)\int_{{(\theta>2)(t)}}\left((\theta-2)^{\frac{\beta}{2}+1}|u_x|+|u_x|\right)dx\\
				&\le C\left(\int_{(\theta>2)(t)}(\theta-2)^\beta_+\theta^2_xdx\right)^\frac{1}{2}
				\left[\left(\int_{{\Omega}}(\theta-2)^{\beta+2}_+dx\right)^\frac{1}{2} \left(\int_{{\Omega}}u_x^2dx\right)^\frac{1}{2} + \left(\int_{{\Omega}}u_x^2dx\right)^\frac{1}{2}\right]\\
				& \le \varepsilon\int_{{(\theta>2)(t)}}\frac{\theta^\beta(\theta-2)^\beta_+\theta^2_x}{v}dx + C(\varepsilon)\left(\int_{{\Omega}}(\theta-2)^{\beta+2}_+dx\right)\int_{{\Omega}}u_x^2dx + C(\varepsilon)\int_{{\Omega}}u_x^2dx.
				\label{2.73}
			\end{aligned}
		\end{equation}
		By applying Gronwall's inequality to (\ref{2.71}) after substituting (\ref{2.72}) and (\ref{2.73}), we obtain
		\begin{equation}
			\begin{aligned}
				\int_{{\Omega}}(\theta-2)^{\beta+2}_+dx + \int_{0}^{T}\int_{{\Omega}}\frac{\theta^\beta(\theta-2)^\beta_+\theta_x^2}{v}dx dt\le C+ C\int_{0}^{T}\left(\int_{{\Omega}}u_x^2dx\right)^2dt.
				\label{2.74}
			\end{aligned}
		\end{equation}
		After using (\ref{2.22}) and (\ref{2.74}), we obtain that
		\begin{equation}
			\begin{aligned}
				\int_{0}^{T}\int_{{\Omega}}\theta^{2\beta}\theta_x^2dxdt &\le \int_{0}^{T}\int_{(\theta>3)(t)}\theta^{2\beta}\theta_x^2dxdt + \int_{0}^{T}\int_{(\theta<4)(t)}\theta^{2\beta}\theta_x^2dxdt\\
				&\le C\int_{0}^{T}\int_{(\theta>3)(t)}\theta^\beta(\theta-2)^\beta_+\theta_x^2dxdt + C\int_{0}^{T}\int_{(\theta<4)(t)}\theta^{-1}\theta_x^2dxdt\\
				& \le  C\int_{0}^{T}\int_{{\Omega}}\frac{\theta^\beta(\theta-2)^\beta_+\theta_x^2}{v}dxdt +C \\
				&\le C+ C\int_{0}^{T}\left(\int_{{\Omega}}u_x^2dx\right)^2dt,
				\label{2.75}
			\end{aligned}
		\end{equation}
		and therefore
		\begin{equation}
			\begin{aligned}
				\int_{0}^{T}\int_{{\Omega}}\theta_x^2dxdt &= \int_{0}^{T}\int_{{\Omega}}\theta_x^21_{(\theta>2)}dxdt + \int_{0}^{T}\int_{{\Omega}}\theta_x^21_{(\theta<2)}dxdt\\
				&\le C\int_{0}^{T}\int_{{\Omega}}\theta^{2\beta}\theta_x^21_{(\theta>2)}dxdt + C\int_{0}^{T}\int_{{\Omega}}\frac{\theta_x^2}{\theta}1_{(\theta<2)}dxdt\\
				&\le C+ C\int_{0}^{T}\left(\int_{{\Omega}}u_x^2dx\right)^2dt.
				\label{q4}
			\end{aligned}
		\end{equation}
		
		Next, we rewrite (\ref{1.2}) as
		\begin{equation}
			u_t-\frac{(u_x-\theta)_x}{v} - \frac{v_x}{v^2} = \Phi,
			\label{q6}
		\end{equation}
		with $\Phi:=- v^{-2}(u_x-(\theta-1))v_x$ satisfying
		\begin{equation}
			\begin{aligned}
				\int_{{\Omega}}\Phi^2dx&\le C\int_{{\Omega}}v_x^2u_x^2dx + C\int_{{\Omega}}v_x^2(\theta-1)^2dx\\
				&\le C\max_{x \in \Omega}u_x^2\int_{{\Omega}}v_x^2dx + C\max_{x \in \Omega}(\theta-1)^2\int_{{\Omega}}v_x^2dx\\
				& \le C\|u_x\|_{L^2}\|u_{xx}\|_{L^2} + C\|v_x\|_{L^2}\|\theta-1\|_{L^2}\|\theta_x\|_{L^2}\\
				& \le \varepsilon(\|u_{xx}\|_{L^2}^2+\|v_x\|_{L^2}^2) + C(\varepsilon)\left(\|u_x\|_{L^2}^2 + \|\theta_x\|_{L^2}^2\right),
				\label{q7}
			\end{aligned}
		\end{equation}
		where in the third inequality we used $\eqref{2.59}$ and the facts that
		\begin{equation}
			\max_{x\in\Omega}u_x^2\le C\|u_x\|_{L^2(\Omega)}\|u_{xx}\|_{L^2(\Omega)}.
			\label{q8}
		\end{equation}
		
		Then, multiplying (\ref{q6}) by $-(u_x-\theta+v)_x$ and integrating by parts gives
		\begin{equation}
			\begin{aligned}
				&\frac{1}{2}\frac{d}{dt}\int_{\Omega}u_x^2dx + \int_{{\Omega}}\frac{vu_{xx}^2+v\theta_{x}^2+v_x^2}{v^2}dx +\int_{{\Omega}}\frac{(u_x-\theta)_xv_x(1+v)}{v^2}dx \\
				&=\left(\int_{{\Omega}}u_x(\theta-v)dx\right)_t + \int_{{\Omega}}\left(\frac{2u_{xx}\theta_{x}}{v}+u_x(v-\theta)_t\right)dx - \int_{{\Omega}}(u_x-\theta+v)_x\Phi dx.
				\label{q10}
			\end{aligned}
		\end{equation}
		
		For the third term on the left hand of \eqref{q10},   we have by (\ref{1.1})
		\begin{equation}
			\begin{aligned}
				&\int_{{\Omega}}\frac{(u_x-\theta)_xv_x(1+v)}{v^2}dx\\
				&=\int_{{\Omega}}\frac{v_{xt}v_x(1+v)}{v^2}dx - \int_{{\Omega}}\frac{\theta_xv_x(1+v)}{v^2}dx\\
				& = \frac{1}{2}\left(\int_{{\Omega}}\frac{v_x^2(1+v)}{v^2}dx\right)_t + \frac{1}{2}\int_{{\Omega}}\frac{v_x^2u_x(v+2)}{v^3}dx - \int_{{\Omega}}\frac{\theta_xv_x(1+v)}{v^2}dx\\
				&\ge \frac{1}{2}\left(\int_{{\Omega}}\frac{v_x^2(1+v)}{v^2}dx\right)_t - C(\varepsilon)\int_{{\Omega}}v_x^2u_x^2dx - \varepsilon\int_{{\Omega}}v_x^2dx - C(\varepsilon)\int_{{\Omega}}\theta_x^2dx\\
				&\ge \frac{1}{2}\left(\int_{{\Omega}}\frac{v_x^2(1+v)}{v^2}dx\right)_t - C\varepsilon\int_{{\Omega}}(v_x^2+ u_{xx}^2)dx - C(\varepsilon)\int_{{\Omega}}(u_x^2 + \theta_x^2)dx,
				\label{q11}
			\end{aligned}
		\end{equation}
		where in the last inequality we have used (\ref{q8}).
		
		For the second term on the right hand of (\ref{q10}), we use
		(\ref{2.2}) and (\ref{1.9}) to deduce
		\begin{equation}
			\begin{aligned}
				&\int_{{\Omega}}\left(\frac{2u_{xx}\theta_{x}}{v}+u_x(v-\theta)_t\right)dx \\&= \int_{{\Omega}}\frac{2u_{xx}\theta_x}{v}dx+\int_{{\Omega}}u_x^2dx- \int_{{\Omega}}\frac{(u_x-\theta)u_x^2}{v}dx+\int_\Omega\frac{\theta^\beta\theta_xu_{xx}}{v}dx\\
				&\le C\int_{{\Omega}}u_x^2dx +C \max_{x\in \Omega}|u_x|\int_{{\Omega}}u_x^2dx + C\max_{x \in \Omega}|\theta-1|\int_{{\Omega}}u_x^2dx\\
				&\quad + \varepsilon\int_{{\Omega}}u_{xx}^2dx + C(\varepsilon)\int_{{\Omega}}\theta_x^2dx+ C(\varepsilon)\int_{{\Omega}}\theta^{2\beta}\theta_x^2dx\\
				&\le C(\varepsilon)\int_{{\Omega}}u_x^2dx + C(\varepsilon)\left(\int_{{\Omega}}u_x^2dx\right)^2 + \varepsilon\int_{{\Omega}}u_{xx}^2dx + C(\varepsilon)\int_{{\Omega}}\theta_x^2dx+ C(\varepsilon)\int_{{\Omega}}\theta^{2\beta}\theta_x^2dx,
				\label{q12}
			\end{aligned}
		\end{equation}
		where in the last inequality we have used the following facts:
		\begin{equation*}
			\begin{aligned}
				\max_{x\in \Omega}|u_x|\int_{{\Omega}}u_x^2dx&\le \|u_x\|_{L^2}^{5/2}\|u_{xx}\|_{L^2}^{1/2}\\
				&\le C(\varepsilon)\|u_x\|_{L^2}^{10/3} + \varepsilon\|u_{xx}\|_{L^2}^2\\
				&\le C(\varepsilon)(\|u_x\|_{L^2}^2 + \|u_x\|_{L^2}^4) + \varepsilon\|u_{xx}\|_{L^2}^2,
			\end{aligned}
		\end{equation*}
		and
		\begin{equation*}
			\begin{aligned}
				\max_{x \in \Omega}|\theta-1|\int_{{\Omega}}u_x^2dx& \le \|\theta-1\|_{L^2}^{1/2}\|\theta_x\|_{L^2}^{1/2}\|u_x\|_{L^2}^2\\
				&\le C\|\theta_x\|_{L^2}^{1/2}\|u_x\|_{L^2}^2\\
				&\le C(\|\theta_x\|^2_{L^2}+\|u_x\|_{L^2}^{8/3})\\
				&\le C(\|\theta_x\|^2_{L^2}+\|u_x\|_{L^2}^2 +\|u_x\|_{L^2}^4 ).
			\end{aligned}
		\end{equation*}
		
		For the last term on the right hand of (\ref{q10}), combining Cauchy's inequality with (\ref{q7}) yields that
		\begin{equation}
			\begin{aligned}
				&\left|\int_{{\Omega}}\left((u_x-\theta)_x+ v_x\right)\Phi dx\right|\\
				& \le \frac{1}{2}\int_{{\Omega}}\frac{vu_{xx}^2+v\theta_{x}^2+v_x^2}{v^2}dx + C\int_{{\Omega}}\Phi^2dx\\
				&\le \frac{1}{2}\int_{{\Omega}}\frac{vu_{xx}^2+v\theta_{x}^2+v_x^2}{v^2}dx +  C\varepsilon\int_{{\Omega}}(v_x^2+ u_{xx}^2)dx + C(\varepsilon)\int_{{\Omega}}(u_x^2+\theta_x^2)dx.
				\label{q13}
			\end{aligned}
		\end{equation}
		Putting (\ref{q11})-(\ref{q13}) into (\ref{q10}), choosing $\varepsilon$ small enough and making use of $\eqref{2.3}, \eqref{2.28}, \eqref{2.54}, \eqref{2.75}$, $\eqref{q4}$, we obtain that after integrating the result over $(0,T)$
		\begin{equation}
			\begin{aligned}\label{zz}
				\sup_{0\le t\le T}\int_\Omega u_x^2dx+ \int_{0}^{T}\int_\Omega (u_{xx}^2+\theta_x^2+v_x^2)dxdt
				\le C\int_{0}^{T}\left(\int_\Omega u_x^2dx\right)^2dt+C,
			\end{aligned}
		\end{equation}
		and where we have used the following estimate
		\begin{equation*}
			\begin{aligned}
				\left|\int_{{\Omega}}u_x(\theta-v)dx\right|& \le \frac{1}{4}\int_{{\Omega}}u_x^2dx + C\int_{{\Omega}}\left((\theta-1)^2+(v-1)^2\right)dx\\
				&\le \frac{1}{4}\int_{{\Omega}}u_x^2dx + C.
			\end{aligned}
		\end{equation*}
		Then, applying Gronwall's equality to $\eqref{zz}$ and using  $\eqref{2.28}, \eqref{2.54}$ gives
		\begin{equation}
			\sup_{0 \leq t \leq T}\int_{{\Omega}}u_x^2dx + \int_{0}^{T}\int_{{\Omega}}(u_{xx}^2+\theta_x^2+v_x^2)dxdt\le C.
			\label{q14}
		\end{equation}
		
		Finally, we write (\ref{1.2}) as
		\begin{equation*}
			u_t = \frac{u_{xx}}{v} - \frac{u_xv_x}{v^2} - \frac{\theta_x}{v} + \frac{\theta v_x}{v^2},
		\end{equation*}
		which together with (\ref{q14}), (\ref{2.28}) and (\ref{2.54}) that
		\begin{equation*}
			\int_{0}^{T}\int_{{\Omega}}u_t^2dxdt\le C.
		\end{equation*}
		Combing this with (\ref{q14}) finishes the proof of Lemma \ref{lemma q}.	
	\end{proof}

	Now, we can prove the uniform lower and upper bounds of the temperature $\theta$.
	
	\begin{lemma}
		\label{lemma2.10}
		There exists a positive constant C such that for any $(x,t) \in \Omega \times[0, T]$
		\begin{equation}
			C^{-1}\le \theta(x,t) \le C.
			\label{GJ}
		\end{equation}
		\begin{proof}
			Multiplying (\ref{2.2}) by $ \theta^ \beta \theta_t $ and integrating the resultant equality over $\Omega$ yields
			\begin{equation*}
				\begin{aligned}
					&\int_{{\Omega}}\theta^\beta\theta_t^2 dx +\int_{{\Omega}}\frac{\theta^{\beta+1}\theta_tu_x}{v}dx \\
					&= \int_{{\Omega}} \left(\frac{\theta^\beta\theta_x}{v}\right)_x\theta^\beta \theta_tdx + \int_{{\Omega}}\frac{\theta^\beta\theta_tu_x^2}{v}dx\\
					&=- \int_{{\Omega}}\frac{\theta^\beta\theta_x}{v}\left(\theta^\beta\theta_t\right)_xdx + \int_{{\Omega}}\frac{\theta^\beta\theta_tu_x^2}{v}dx \\
					& = - \int_{{\Omega}}\frac{\theta^\beta\theta_x}{v}\left(\theta^\beta\theta_x\right)_tdx + \int_{{\Omega}}\frac{\theta^\beta\theta_tu_x^2}{v}dx \\
					& = -\frac{1}{2}\int_{{\Omega}}\frac{\left((\theta^\beta\theta_x)^2\right)_t}{v}dx+ \int_{{\Omega}}\frac{\theta^\beta\theta_tu_x^2}{v}dx \\
					& = -\frac{1}{2}\left(\int_{{\Omega}}\frac{(\theta^\beta\theta_x)^2}{v}dx\right)_t - \frac{1}{2}\int_{{\Omega}}\frac{(\theta^\beta\theta_x)^2u_x}{v^2}dx + \int_{{\Omega}}\frac{\theta^\beta\theta_tu_x^2}{v}dx,
				\end{aligned}
			\end{equation*}
			which gives
			\begin{equation}
				\begin{aligned}
					&\frac{1}{2}\left(\int_{{\Omega}}\frac{(\theta^\beta\theta_x)^2}{v}dx\right)_t + \int_{{\Omega}}\theta^\beta\theta_t^2dx  \\
					&=-\frac{1}{2}\int_{{\Omega}}\frac{(\theta^\beta\theta_x)^2u_x}{v^2}dx - \int_{{\Omega}}\frac{\theta^{\beta+1}\theta_tu_x}{v}dx + \int_{{\Omega}}\frac{\theta^\beta\theta_tu_x^2}{v}dx \\
					&\le C\max_{x\in\Omega}|u_x|\int_{{\Omega}}(\theta^\beta\theta_x)^2dx + \frac{1}{2}\int_{\Omega}\theta^\beta\theta_t^2dx + C\int_{{\Omega}}\theta^{\beta+2}u_x^2dx \\
					&\quad+ C\int_{{\Omega}}(\theta-2)^\beta_+u_x^4dx + C\int_{{\Omega}}u_x^4dx\\
					&\le C\left(\int_{{\Omega}}(\theta^\beta\theta_x)^2dx\right)^2 + C\max_{x\in\Omega}u_x^2 + C\max_{x \in \Omega}u_x^4 +  C\int_{{\Omega}}u_x^2dx + \frac{1}{2}\int_{{\Omega}}\theta^\beta\theta_t^2dx,
					\label{2.87}
				\end{aligned}
			\end{equation}
			due to (\ref{2.74}) and (\ref{q1}).
			
			Then, it follows from (\ref{q1}) and  (\ref{q8}) that
			\begin{equation}
				\int_{0}^{T}\max_{x\in\Omega}(u_x^2 + u_x^4)dt\le C,
				\label{2.88}
			\end{equation}	
			which together with (\ref{2.87}),  Gronwall's inequality and (\ref{2.75}) leads to
			\begin{equation}
				\sup_{0\le t <T}\int_{{\Omega}}(\theta^\beta\theta_x)^2dx + \int_{0}^{T}\int_{{\Omega}}\theta^\beta\theta_t^2dxdt \le C.
				\label{2.89}
			\end{equation}
			Combining this with (\ref{2.22}) in particular gives
			\begin{equation*}
				\begin{aligned}
					\max_{x\in\Omega}(\theta-2)_+ &\le\int_{(\theta>2)(t)}|\theta_x|dx\\
					&\le C\left(\int_{(\theta>2)(t)}(\theta^\beta\theta_x)^2dx\right)^{1/2} \le C,
				\end{aligned}
			\end{equation*}
			which implies that for all $(x,t) \in \Omega \times [0,\infty)$,
			\begin{equation}
				\theta(x,t)\le C.
				\label{2.90}
			\end{equation}
			
			Next, multiplying (\ref{2.2}) by $(\theta-1)^5$ and integrating the resulting equality over $\Omega$ yields
			\begin{equation}
				\begin{aligned}
					&\frac{1}{6}\frac{d}{dt}\int_{{\Omega}}(\theta-1)^6dx \\
					&\le \left|-5\int_{{\Omega}}\frac{(\theta-1)^4\theta^{\beta}\theta_x^2}{v}dx - \int_{{\Omega}}\frac{\theta(\theta-1)^5u_x}{v}dx + \int_{{\Omega}}\frac{(\theta-1)^5u_x^2}{v}dx\right|\\
					& \le C\int_{{\Omega}}\theta_x^2dx + C\int_{{\Omega}}(\theta-1)^6dx + C\int_{{\Omega}}u_x^2dx \\
					& \le C \int_{{\Omega}}\theta_x^2dx + C\int_{{\Omega}}u_x^2dx,
					\label{2.91}
				\end{aligned}
			\end{equation}
			where in the last inequality we have used
			\begin{equation*}
				\int_{{\Omega}}(\theta-1)^6dx\le C\left(\int_{{\Omega}}(\theta-1)^2dx\right)^2\int_{{\Omega}}\theta_x^2dx \le C\int_{{\Omega}}\theta_x^2dx,
			\end{equation*}	
			due to (\ref{2.1}) and (\ref{2.90}). Combining this, (\ref{2.91}) and (\ref{q1}) gives
			\begin{equation}
				\lim_{t \rightarrow \infty}\int_{{\Omega}}(\theta-1)^6dx=0.
				\label{2.92}
			\end{equation}
			Moreover, Sobolev's inequality shows
			\begin{equation*}
				\begin{aligned}
					\max_{x\in\Omega}(\theta^{\beta+1}-1)^2 &\le C\left(\int_{{\Omega}}(\theta^{\beta+1}-1)^6dx\right)^{1/4}\left(\int_{{\Omega}}\theta^{2\beta}\theta_x^2dx\right)^{1/4}\\
					& \le C\left(\int_{{\Omega}}(\theta-1)^6dx\right)^{1/4},
				\end{aligned}
			\end{equation*}
			which together with (\ref{2.92}) implies that there exists some $T_0>0$ such that
			\begin{equation}
				\theta(x,t) \ge 1/2,
				\label{2.93}
			\end{equation}
			for all $(x,t)\in \Omega \times[T_0,\infty)$. Finally, it follows from Lemma 2.3 in \cite{LSX} that there exists some constant $C\ge2$ such that
			\begin{equation*}
				\theta(x,t)\ge C^{-1},
			\end{equation*}	
			for all $(x,t)\in \Omega \times[0,T_0]$. Combining this, (\ref{2.93}) and  (\ref{2.90})  finishes the proof of Lemma \ref{lemma2.10}.
		\end{proof}
	\end{lemma}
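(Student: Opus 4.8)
The plan is to prove the two bounds in \eqref{GJ} in two stages: first the uniform-in-time upper bound $\theta\le C$, and then the lower bound $\theta\ge C^{-1}$, the latter by combining a large-time decay statement with a local-in-time lower bound.

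For the upper bound I would differentiate the natural dissipation quantity associated with the degenerate diffusion, namely $\int_\Omega(\theta^\beta\theta_x)^2/v\,dx$. Multiplying the rewritten energy equation \eqref{2.2} by $\theta^\beta\theta_t$, integrating over $\Omega$, integrating the diffusion term by parts (the boundary term vanishes by \eqref{1.9}), and using the identity $(\theta^\beta\theta_t)_x=(\theta^\beta\theta_x)_t$ together with $v_t=u_x$, one obtains
\begin{equation*}
\begin{aligned}
&\frac{1}{2}\left(\int_{\Omega}\frac{(\theta^\beta\theta_x)^2}{v}dx\right)_t + \int_{\Omega}\theta^\beta\theta_t^2\,dx\\
&\quad = -\frac{1}{2}\int_{\Omega}\frac{(\theta^\beta\theta_x)^2u_x}{v^2}dx - \int_{\Omega}\frac{\theta^{\beta+1}\theta_tu_x}{v}dx + \int_{\Omega}\frac{\theta^\beta\theta_tu_x^2}{v}dx.
\end{aligned}
\end{equation*}
After absorbing $\tfrac12\int_\Omega\theta^\beta\theta_t^2\,dx$ into the left-hand side, I would estimate the right-hand side, using Cauchy's inequality, \eqref{2.3} and \eqref{2.74}, by
\begin{equation*}
C\left(\int_{\Omega}(\theta^\beta\theta_x)^2dx\right)^2 + C\max_{x\in\Omega}(u_x^2+u_x^4) + C\int_{\Omega}u_x^2dx.
\end{equation*}
The crucial point is that Gronwall's inequality now closes: the coefficient $\int_\Omega(\theta^\beta\theta_x)^2dx$ is time-integrable by \eqref{2.74}--\eqref{2.75}, while $\max_x(u_x^2+u_x^4)$ is time-integrable because of \eqref{q1}, \eqref{q8} and \eqref{2.28}/\eqref{2.54}. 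This delivers $\sup_t\int_\Omega(\theta^\beta\theta_x)^2dx\le C$, after which the measure bound \eqref{2.22} and the elementary estimate
\begin{equation*}
\max_{x\in\Omega}(\theta-2)_+\le\int_{(\theta>2)(t)}|\theta_x|dx\le C\left(\int_{(\theta>2)(t)}(\theta^\beta\theta_x)^2dx\right)^{1/2}
\end{equation*}
yield the pointwise upper bound $\theta(x,t)\le C$.

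For the lower bound I would first establish decay of $\theta$ toward the far-field value $1$. Multiplying \eqref{2.2} by $(\theta-1)^5$ and integrating by parts, the diffusion contribution has a good sign, and what remains is controlled by $C\int_\Omega\theta_x^2dx+C\int_\Omega u_x^2dx$. Since the Gagliardo--Nirenberg inequality together with the now-available upper bound gives $\int_\Omega(\theta-1)^6dx\le C(\int_\Omega(\theta-1)^2dx)^2\int_\Omega\theta_x^2dx\le C\int_\Omega\theta_x^2dx$, the quantity $\int_\Omega(\theta-1)^6dx$ is itself time-integrable, so by \eqref{q1} it tends to $0$ as $t\to\infty$. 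A Sobolev embedding upgrades this to $\max_x(\theta^{\beta+1}-1)^2\to0$, whence $\theta\ge1/2$ for all $x$ and all $t\ge T_0$ with some $T_0$. On the remaining finite interval $[0,T_0]$ I would invoke the local lower bound of Lemma~2.3 in \cite{LSX}, which gives $\theta\ge C^{-1}$ there. Combining the two regimes with the upper bound completes \eqref{GJ}.

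I expect the upper-bound step to be the main obstacle. Because of the degeneracy and nonlinearity of $\theta^\beta$, the natural dissipation variable is $\theta^\beta\theta_x$ rather than $\theta_x$, and the energy identity for it produces a genuinely quadratic term $(\int_\Omega(\theta^\beta\theta_x)^2dx)^2$ that cannot simply be absorbed. Closing the Gronwall estimate therefore depends on having simultaneously the time-integral bound \eqref{2.74} (so the Gronwall coefficient is $L^1$ in time) and the time-integrability of $\max_x(u_x^2+u_x^4)$, which themselves rest on the full preceding chain \eqref{2.28}/\eqref{2.54}, \eqref{2.59} and \eqref{q1}. Once uniform control of $\theta^\beta\theta_x$ is secured, the passage to the pointwise upper bound and the entire lower-bound argument are comparatively routine.
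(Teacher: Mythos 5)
Your proposal is correct and follows essentially the same route as the paper: the multiplier $\theta^\beta\theta_t$ yielding the energy identity for $\int_\Omega(\theta^\beta\theta_x)^2/v\,dx$, Gronwall closed by the time-integrability of $\int_\Omega(\theta^\beta\theta_x)^2dx$ (from \eqref{2.74}--\eqref{2.75}) and of $\max_x(u_x^2+u_x^4)$ (from \eqref{q1}, \eqref{q8}), the pointwise upper bound via $(\theta-2)_+$ and \eqref{2.22}, the $(\theta-1)^5$ multiplier giving decay of $\int_\Omega(\theta-1)^6dx$, the Sobolev upgrade to $\max_x(\theta^{\beta+1}-1)^2$, and the appeal to Lemma 2.3 of \cite{LSX} on the finite interval $[0,T_0]$. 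No substantive differences to report.
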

	
	Finally, we have the following uniform estimate on the $L^2(\Omega\times(0,T))$-norm of $\theta_t$ and $\theta_{xx}$.
	
	\begin{lemma}
		\label{lemma 2.11}
		There exists a positive constant C such that
		\begin{equation}
			\sup_{0 \leq t \leq T}\int_{{\Omega}}\theta_x^2dx + \int_{0}^{T}\int_{{\Omega}}(\theta_t^2+\theta_{xx}^2)dxdt \le C.
			\label{2.94}
		\end{equation}
		Moreover, for any $p\in(2,\infty]$,
		\begin{equation}
			\lim_{t\rightarrow \infty}\left(\left\|(v-1,u,\theta-1)(t)\right\|_{L^p(\Omega)}
			+\|(v_x, u_x, \theta_x)\|_{L^2(\Omega)}\right)=0.
			\label{w1}
		\end{equation}		
	\end{lemma}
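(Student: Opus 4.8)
The plan is to prove \eqref{2.94} first and then read off the asymptotics \eqref{w1}. For the a priori bound, I would start from \eqref{2.89}, which already controls $\sup_t\int_\Omega(\theta^\beta\theta_x)^2dx$ and $\int_0^T\!\!\int_\Omega\theta^\beta\theta_t^2\,dxdt$. Since Lemma \ref{lemma2.10} gives the two-sided bound \eqref{GJ}, the weight $\theta^\beta$ is bounded above and below, so these two estimates immediately yield $\sup_t\int_\Omega\theta_x^2dx\le C$ and $\int_0^T\!\!\int_\Omega\theta_t^2\,dxdt\le C$. To capture $\theta_{xx}$ I would solve the rewritten energy balance \eqref{2.2} for the second derivative,
\[
\theta_{xx}=\frac{v}{\theta^\beta}\Big(\theta_t+\frac{\theta}{v}u_x-\frac{u_x^2}{v}\Big)-\frac{\beta\theta_x^2}{\theta}+\frac{v_x\theta_x}{v},
\]
so that \eqref{GJ} gives the pointwise estimate $|\theta_{xx}|\le C(|\theta_t|+|u_x|+u_x^2+\theta_x^2+|v_x||\theta_x|)$. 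Squaring and integrating over $\Omega\times(0,T)$, the terms $\theta_t^2,u_x^2$ are already controlled, $\int_0^T\!\!\int_\Omega u_x^4\,dxdt$ is bounded by \eqref{2.88} together with $\sup_t\int u_x^2\le C$, and the two gradient–product terms are handled via $\max_x\theta_x^2\le C\|\theta_x\|_{L^2}\|\theta_{xx}\|_{L^2}$ combined with $\sup_t\int\theta_x^2\le C$ and $\sup_t\int v_x^2\le C$ from \eqref{2.59}; this produces a factor $\varepsilon\int_0^T\!\!\int_\Omega\theta_{xx}^2$ on the right, absorbed for $\varepsilon$ small, giving $\int_0^T\!\!\int_\Omega\theta_{xx}^2\,dxdt\le C$ and hence \eqref{2.94}.

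For the large-time behaviour \eqref{w1} the mechanism is the elementary fact that a nonnegative $f$ with $f\in L^1(0,\infty)$ and $f'\in L^1(0,\infty)$ must tend to $0$; the inputs are $\int_0^\infty(\|v_x\|_{L^2}^2+\|u_x\|_{L^2}^2+\|\theta_x\|_{L^2}^2)dt<\infty$ from \eqref{q1} and \eqref{2.94}. For $f=\|v_x\|_{L^2}^2$ I would differentiate directly, $f'=2\int_\Omega v_xu_{xx}dx$ using $v_{xt}=u_{xx}$, so no integration by parts and no boundary term arise and $\int_0^\infty|f'|dt\le\int_0^\infty(\|v_x\|_{L^2}^2+\|u_{xx}\|_{L^2}^2)dt<\infty$. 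For $f=\|\theta_x\|_{L^2}^2$ integration by parts gives $f'=-2\int_\Omega\theta_{xx}\theta_t\,dx$, the boundary contribution vanishing since $\theta_x(0,t)=0$ by \eqref{1.9}, and again $\int_0^\infty|f'|dt<\infty$. Hence $\|v_x\|_{L^2}\to0$ and $\|\theta_x\|_{L^2}\to0$, and then $\|v-1\|_{L^\infty}^2\le 2\|v-1\|_{L^2}\|v_x\|_{L^2}\to0$, $\|\theta-1\|_{L^\infty}^2\le 2\|\theta-1\|_{L^2}\|\theta_x\|_{L^2}\to0$.

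The step I expect to be the main obstacle is $\|u_x\|_{L^2}\to0$, because differentiating $\|u_x\|_{L^2}^2$ naively produces the boundary term $u_x(0,t)u_t(0,t)$ from the outer pressure condition \eqref{1.8}, which is not directly controllable. Instead I would reuse the functional underlying Lemma \ref{lemma q}: set
\[
\mathcal E(t)\triangleq\frac12\int_\Omega u_x^2dx+\frac12\int_\Omega\frac{v_x^2(1+v)}{v^2}dx-\int_\Omega u_x(\theta-v)dx,
\]
noting that \eqref{q10} was obtained by testing with $-(u_x-\theta+v)_x$ and that $u_x-\theta+v$ vanishes at $x=0$ by \eqref{1.8}, so all boundary terms are already eliminated. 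Combining \eqref{q10} with \eqref{q11}--\eqref{q13} yields $\mathcal E'(t)+\tfrac12\int_\Omega v^{-2}(vu_{xx}^2+v\theta_x^2+v_x^2)dx\le C(\|u_x\|_{L^2}^2+\|u_x\|_{L^2}^4+\|\theta_x\|_{L^2}^2)$, whose right-hand side is in $L^1(0,\infty)$ (using $\sup_t\|u_x\|_{L^2}\le C$), so $\mathcal E'\in L^1$ and $\mathcal E(t)$ converges. Since $\int_0^\infty\|u_x\|_{L^2}^2dt<\infty$ forces $\liminf_t\|u_x\|_{L^2}=0$, the limit of $\mathcal E$ must be $0$. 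Finally I would show the cross term vanishes by integrating by parts, $\int_\Omega u_x(\theta-v)dx=-u(0,t)(\theta-v)(0,t)-\int_\Omega u(\theta_x-v_x)dx$, where the boundary value tends to $0$ because $\|v-1\|_{L^\infty}+\|\theta-1\|_{L^\infty}\to0$ and $\|u\|_{L^\infty}\le C$ by \eqref{2.38}, while the integral is bounded by $C(\|\theta_x\|_{L^2}+\|v_x\|_{L^2})\to0$; together with $\|v_x\|_{L^2}\to0$ this gives $\|u_x\|_{L^2}^2=2\mathcal E(t)+o(1)\to0$.

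With all three gradients decaying in $L^2$ and $\|u\|_{L^\infty}^2\le C\|u_x\|_{L^2}\to0$, the statement for $p\in(2,\infty]$ follows by interpolation: $\|g\|_{L^p}^p\le\|g\|_{L^\infty}^{p-2}\|g\|_{L^2}^2$ for $g\in\{v-1,u,\theta-1\}$, using the uniform bound $\|g\|_{L^2}\le C$. This establishes \eqref{w1} and completes Lemma \ref{lemma 2.11}. The genuinely nontrivial point throughout is the treatment of the $x=0$ boundary under \eqref{1.8}; my plan sidesteps it for $\theta_x$ via \eqref{1.9} and for $u_x$ by working with $\mathcal E(t)$, where the structure $(u_x-\theta+v)(0,t)=0$ is exactly what kills the offending boundary contributions.
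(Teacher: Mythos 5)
Your proof is correct, and while the a priori estimate \eqref{2.94} is obtained essentially as in the paper (the paper tests \eqref{2.2} against $\theta_{xx}$ and absorbs, which is the same computation as your pointwise elliptic estimate for $\theta_{xx}$, both hinging on \eqref{GJ}, \eqref{2.59}, \eqref{q1}, \eqref{2.88} and the interpolation $\max_x\theta_x^2\le C\|\theta_x\|_{L^2}\|\theta_{xx}\|_{L^2}$), your treatment of the decay $\|u_x\|_{L^2}\to 0$ is genuinely different from the paper's. The paper differentiates the momentum equation \eqref{1.2} in time, multiplies by $u_t$, and uses \eqref{1.8} (through $\sigma_t(0,t)=0$, $\sigma=(u_x-\theta)/v$) to kill the boundary term, obtaining \eqref{2.104}; it then introduces the time weight $\zeta(t)=\min\{1,t\}$ to get \eqref{2.105}, i.e. $u_{xt}\in L^2(\Omega\times(1,\infty))$, after which $\frac{d}{dt}\|u_x\|_{L^2}^2=2\int u_xu_{xt}\,dx$ is absolutely integrable on $[1,\infty)$ and the standard $f\in L^1$, $f'\in L^1$ argument applies. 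You instead avoid $u_{xt}$ altogether by recycling the Lyapunov functional built into \eqref{q10}--\eqref{q13}, whose boundary terms were already eliminated by $(u_x-\theta+v)(0,t)=0$: the one-sided inequality $\mathcal E'\le -D+g$ with $g\in L^1$ and $\mathcal E$ bounded indeed gives $\mathcal E'\in L^1$ (the positive part is dominated by $g$, and boundedness of $\mathcal E$ then controls the negative part), so $\mathcal E$ converges, and you identify the limit as zero via $\liminf\|u_x\|_{L^2}=0$ together with the decay of the $v_x$-term and of the cross term $\int u_x(\theta-v)\,dx$. What each approach buys: the paper's route yields extra information of independent interest ($\zeta$-weighted bounds on $u_t$ and $u_{xt}$), at the cost of a second-order-in-time computation; your route stays entirely within the first-order estimates already proved in Lemmas \ref{lemma2.8}--\ref{lemma q}, but requires the limit-identification step (liminf plus uniform convergence of $v$ and $\theta$ to handle the boundary value $u(0,t)(\theta-v)(0,t)$), which is slightly more delicate bookkeeping. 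Both proofs confront the outer-pressure boundary \eqref{1.8} at exactly one point, and both resolutions are sound.
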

	\begin{proof}
		First, both \eqref{GJ} and (\ref{2.89}) lead to
		\begin{equation}
			\sup_{0 \leq t \leq T}\int_{{\Omega}}\theta_x^2dx + \int_{0}^{T}\int_{{\Omega}}\theta_t^2dxdt \le C.
			\label{2.95}
		\end{equation}

		Then, multiplying (\ref{2.2}) by $\theta_{xx}$ and integrating the result over $\Omega $, we have
		\begin{equation}
			\begin{aligned}	\frac{1}{2}\frac{d}{dt}\int_{ {\Omega}}\theta_x^2dx + \int_{ {\Omega}}\frac{\theta^{\beta}\theta_{xx}^2}{v}dxdt= \int_{ {\Omega}}\left(\frac{\theta^{\beta}\theta_xv_x}{v^2}-\frac{\beta\theta^{\beta-1}\theta_x^2}{v}-\frac{u_x^2}{v} +\frac{\theta u_x}{v}\right)\theta_{xx}dx,
				\label{qwe}
			\end{aligned}
		\end{equation}
		for the right hand term, using Cauchy's inequality gives
		\begin{equation}
			\begin{aligned}
				&\int_{0}^{T}\int_{ {\Omega}}\left|\frac{\theta^{\beta}\theta_xv_x}{v^2}-\frac{\beta\theta^{\beta-1}\theta_x^2}{v}-\frac{u_x^2}{v} +\frac{\theta u_x}{v}\right| \left|\theta_{xx}\right|dxdt\\
				& \le   \varepsilon \int_{0}^{T}\int_{ {\Omega}}\frac{\theta^{\beta}\theta_{xx}^2}{v}dxdt + C(\varepsilon)\int_{0}^{T}\int_{ {\Omega}}\left(\theta_x^2 v_x^2+  \theta_x^4 + u_x^4 + \theta^2u_x^2 \right)dxdt  \\
				& \le \varepsilon \int_{0}^{T}\int_{ {\Omega}}\frac{\theta^{\beta}\theta_{xx}^2}{v}dxdt +  C(\varepsilon)\int_{0}^{T}\max_{x \in \Omega}\theta_x^2\int_{ {\Omega}}v_x^2dxdt +C(\varepsilon) \int_{0}^{T}\max_{x \in \Omega}\theta_x^2\int_{ {\Omega}}\theta_x^2dxdt  \\
				&\quad + C(\varepsilon) \int_{0}^{T}\max_{x \in \Omega}u_x^2\int_{ {\Omega}}u_x^2dxdt +  C(\varepsilon)\int_{0}^{T}\int_{ {\Omega}}u_x^2dxdt  \\
				&\le  \varepsilon \int_{0}^{T}\int_{ {\Omega}}\frac{\theta^{\beta}\theta_{xx}^2}{v}dxdt + C ,
				\label{1234}
			\end{aligned}
		\end{equation}
		where in the last inequality we have used (\ref{2.59}), (\ref{q1}), (\ref{2.88}), (\ref{2.28}), (\ref{2.54}) and the simple fact:
		\begin{equation*}
			\int_{0}^{T}\max_{x \in \Omega}\theta_x^2dt\le \varepsilon \int_{0}^{T}\int_{ {\Omega}}\frac{\theta^{\beta}\theta_{xx}^2}{v}dxdt +C(\varepsilon) \int_{0}^{T}\int_{ {\Omega}}\theta_x^2 dxdt.
		\end{equation*}
		Combining (\ref{qwe}) and (\ref{1234}) gives
		\begin{equation*}
			\int_{0}^{T}\int_{{\Omega}}\theta_{xx}^2dxdt\le C,
		\end{equation*}
		which together with (\ref{2.95}) gives (\ref{2.94}).
		
		Next, it follows from (\ref{q1}) and (\ref{qwe}) that
		\begin{equation*}
			\int_{0}^{\infty}\left(\|\theta_x(,t)\|^2_{L^2(\Omega)} + \left|\frac{d}{dt}\|\theta_x(,t)\|^2_{L^2(\Omega)}\right|\right)dt\le C,
		\end{equation*}
		which directly gives
		\begin{equation}
			\lim_{t \rightarrow \infty}\|\theta_x(,t)\|_{L^2(\Omega)}=0.
			\label{2.99}
		\end{equation}
		
		Then, combining (\ref{1.1}) and (\ref{q1}) shows
		\begin{equation*}
			\begin{aligned}
				\int_{0}^{\infty}\left|\frac{d}{dt}\|v_x(,t)\|_{L^2(\Omega)}^2\right|dt &= 2\int_{0}^{\infty}\left|\int_{{\Omega}}u_{xx}v_xdx\right|dt \\
				& \le C\int_{0}^{\infty}\int_{{\Omega}}u_{xx}^2dxdt + C\int_{0}^{\infty}\int_{{\Omega}}v_x^2dxdt\\
				&\le C,
			\end{aligned}
		\end{equation*}
		which together with (\ref{q1}) implies
		\begin{equation}
			\lim_{t\rightarrow \infty}\|v_x(,t)\|_{L^2(\Omega)} = 0.
			\label{2.103}
		\end{equation}
		
		Finally, differentiating (\ref{1.2}) with respect to $t$ and multiplying the result by $u_t$, we have after integrating by parts and using (\ref{1.8}),
		\begin{equation}
			\begin{aligned}
				0 & = \frac{1}{2}\left(\int_{{\Omega}}u_t^2dx\right)_t + \int_{{\Omega}}\left(\frac{u_x-\theta}{v}\right)_tu_{xt}dx\\
				& = \frac{1}{2}\left(\int_{{\Omega}}u_t^2dx\right)_t + \int_{{\Omega}} \frac{u_{xt}^2}{v}dx - \int_{{\Omega}}\frac{u_x-\theta}{v^2}u_xu_{xt}dx - \int_{{\Omega}}\frac{\theta_tu_{xt}}{v}dx\\
				&\ge \frac{1}{2}\left(\int_{{\Omega}}u_t^2dx\right)_t + \frac{1}{2}\int_{{\Omega}}\frac{u_{xt}^2}{v}dx - C\int_{{\Omega}}(u_x^4+ u_x^2 + \theta_t^2)dx.
				\label{2.104}
			\end{aligned}
		\end{equation}
		
		Multiplying (\ref{2.104}) by $\zeta(t):=\min{\{1,t\}}$ and integrating the result over $(0,T)$ gives
		\begin{equation}
			\begin{aligned}
				&\sup_{0 \leq t \leq T}\zeta(t)\int_{{\Omega}}u_t^2dx + \int_{0}^{T}\zeta(t)\int_{{\Omega}}u_{xt}^2dxdt\\
				&\le C\int_{0}^{T}\int_{{\Omega}}u_t^2dxdt + C\int_{0}^{T}\int_{{\Omega}}(\theta_t^2 + u_x^2 + u_x^4)dxdt\\
				&\le C,
				\label{2.105}
			\end{aligned}
		\end{equation}
		where we have used $0\le \zeta' \le 1$, (\ref{q1}),(\ref{2.94}),(\ref{2.28}),(\ref{2.54}) and the simple fact:
		\begin{equation*}
			\begin{aligned}
				\int_{0}^{T}\int_{{\Omega}}u_x^4dxdt&\le \int_{0}^{T}\max_{x \in \Omega}u_x^2\int_{{\Omega}}u_x^2dxdt\\
				&\le C\int_{0}^{T}\int_{{\Omega}}u_{xx}^2dxdt+ C\int_{0}^{T}\int_{{\Omega}}u_x^2dxdt\le C.
			\end{aligned}
		\end{equation*}
		Combining (\ref{q10}) with (\ref{2.3}),  (\ref{2.28}), (\ref{2.54}), (\ref{2.59}), (\ref{q1}), (\ref{2.94}),  (\ref{2.105}) yields that for any $T\ge 1$
		\begin{equation*}
			\begin{aligned}
				\int_{T}^{\infty}\left|\frac{d}{dt}\|u_x(,t)\|_{L^2(\Omega)}^2\right|dt & \le \int_{T}^{\infty}\int_{{\Omega}}u_{xt}^2dxdt+ C\le C,
			\end{aligned}
		\end{equation*}
		which together with (\ref{2.28}) and (\ref{2.54}) gives
		\begin{equation*}
			\lim_{t \rightarrow \infty}\|u_x(,t)\|_{L^2(\Omega)} = 0.
		\end{equation*}
		Combining this, (\ref{2.99}) and (\ref{2.103}) gives (\ref{w1}), which together with (\ref{2.94}) finishes the proof of Lemma \ref{lemma 2.11}.
	\end{proof}

\end{document}